\theoremstyle{plain}
\newtheorem{theorem}{Theorem}[section]
\newtheorem{lemma}[theorem]{Lemma}
\newtheorem{corollary}[theorem]{Corollary}
\newtheorem{proposition}[theorem]{Proposition}
\newtheorem{remark}[theorem]{Remark}
\newtheorem{definition}[theorem]{Definition}
\newtheorem{example}[theorem]{Example}
\newtheorem{point}[theorem]{}
\newcommand{\p}{\mathfrak{p}}
\newcommand{\m}{\mathfrak{m}}
\newcommand{\R}{\mathcal{R}}
\newcommand{\F}{\mathcal{F}}
\newcommand{\Z}{\mathbb{Z}}
\newcommand{\N}{\mathbb{N}}
\newcommand{\wt}{\widetilde}
\newcommand{\Ass}{\operatorname{Ass}}
\newcommand{\Spec}{\operatorname{Spec}}
\theoremstyle{plain}
\begin{document}
\title{\large \textbf{ On unmixed and equidimensional associated graded rings }}
\author{\textsc{Tony J. Puthenpurakal}}
\email{tputhen@math.iitb.ac.in}

\author{\textsc{Samarendra Sahoo}}
\email{samarendra.s.math@gmail.com}

\address{Department of Mathematics, IIT Bombay, Powai, Mumbai 400 076, India}
\date{\today}

\subjclass{Primary 13A30, 13C14; Secondary 13D40, 13D45}
\keywords{Associated graded rings, equidimensional and unmixed rings, analytically unramified rings, superficial element, integral closure ideal, tight closure ideal, $F$-rational rings, Hilbert coefficients}

\begin{abstract}
   Let $(A,\m)$ be an analytically unramified  Noetherian local ring of dimension $d \geq 1$ with positive depth, $I$ a $\m$-primary ideal of $A$, and let $\overline{I}$ be the integral closure ideal of $I$. If $A$ is of characteristic $p > 0$, then let $I^*$ denote the tight closure of $I$. Let $G_I(A)=\bigoplus_{n\geq 0}I^n/I^{n+1}$ be the associated graded ring of $A$ with respect to $I$. Assume  $G_I(A)$ is unmixed and equidimensional.  We show that either the function $n\mapsto \lambda(\overline{I^n}/I^n)$ coincides with a polynomial of degree $d-1$ for all $n\gg0$ or $\overline{I^n}=I^n$ for all $n\geq 1.$  We prove an analogous result for the tight closure filtration if $A$ is of characteristic $p > 0$.  When $A$ is generalized Cohen-Macaulay and $I$ is generated by a standard system of parameters, we give bounds for the first Hilbert coefficients of the integral closure filtration of $I$ and the tight closure filtration of $I$.
\end{abstract}
\maketitle

\section{Introduction}
Let $A$ be a commutative Noetherian local ring of dimension $d$ and $I$ an ideal of $A$. Let $G_I(A) = \bigoplus_{n \geq 0} I^n/I^{n+1}$ be the associated graded ring of $A$ with respect to $I$.
There has been a lot of research on when $G_I(A)$ is Cohen-Macaulay or has high depth. In this paper, we show that if $G_I(A)$ is unmixed and equidimensional ($i.e. \operatorname{dim}({G}_I(A)/\mathfrak{p})=d$ for all $\mathfrak{p}\in \operatorname{Ass}({G}_I(A)))$ then it has lots of consequences.
The condition that $G_I(A)$ is equidimensional is a mild one; it holds, for instance, if $A$ is universally catenary and equidimensional, see \ref{UE}.

 Recall that $x\in A$ is said to be integral over $I$ if there exists a monic polynomial $f(t)=t^r+a_{1}t^{r-1}+\ldots +a_{r-1}t+a_r$, where $a_i\in I^i$ for all $1\leq i\leq r$ such that $f(x)=0.$ Set of all integral element over $I$ is denoted as $\overline{I}.$ By a theorem of D. Rees, \cite[Theorem 1.4]{Drees1961}, if $A$ is an analytically unramified local ring then the integral closure $\overline{\R}(I)=\bigoplus_{n\geq 0}\overline{I^n}t^n$ of the Rees algebra $\R=\bigoplus_{n\geq 0}I^nt^n$ in $A[t]$ is module finite over $\R=\bigoplus_{n\geq 0}I^nt^n$. $\lambda(M)$ denote the length of an $A$-module $M$.

We first prove:

\begin{theorem}[Theorem \ref{Thm:integralclosure}]
\label{second}
    Let $(A,\m)$ be an analytically unramified local ring of positive depth, $I$ be an $\m$-primary  ideal of $A.$ If $G_I(A)$ is unmixed and equidimensional then the function $n\mapsto \lambda(\overline{I^n}/{I^n})$ coincides with a polynomial of degree $d-1$ for all $n\gg0$ or $\overline{I^n}=I^n$ for all $n\geq 1.$
\end{theorem}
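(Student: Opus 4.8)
The plan is to compare the two associated graded rings $G := G_I(A)$ and $\overline{G} := \bigoplus_{n\ge 0}\overline{I^n}/\overline{I^{n+1}}$, the latter being the associated graded ring of the integral closure filtration $\overline{\mathcal F} = \{\overline{I^n}\}_n$, which is a good (Noetherian) filtration precisely because Rees's theorem makes $\overline{\R}=\bigoplus_n \overline{I^n}t^n$ module finite over $\R = \bigoplus_n I^n t^n$. The inclusions $I^n \subseteq \overline{I^n}$ induce a natural homogeneous ring homomorphism $\phi \colon G \to \overline{G}$, and the whole statement will come from controlling $\ker\phi$ by means of the unmixedness hypothesis.

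First I would pass to the extended Rees algebras $S = \bigoplus_{n\in\Z} I^n t^n$ and $\overline S = \bigoplus_{n \in \Z}\overline{I^n}t^n$ (with $I^n = \overline{I^n} = A$ for $n \le 0$) inside $A[t,t^{-1}]$. The element $u = t^{-1}$ is a nonzerodivisor on both, since they are subrings of $A[t,t^{-1}]$ in which $t^{-1}$ is a unit, and one has the classical identifications $S/uS \cong G$ and $\overline S/u\overline S \cong \overline G$. Applying the snake lemma to multiplication by $u$ on the short exact sequence $0 \to S \to \overline S \to M \to 0$, where $M := \overline S/S = \bigoplus_{n\ge 1}\overline{I^n}/I^n$, yields the four term exact sequence of graded $G$-modules
\[ 0 \longrightarrow (0 :_M u) \longrightarrow G \xrightarrow{\ \phi\ } \overline G \longrightarrow M/uM \longrightarrow 0, \]
which in particular identifies $\ker\phi$ with $(0 :_M u)$ up to a degree shift.

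Next I would bring in the hypotheses. The function $n \mapsto \lambda(\overline{I^n}/I^n) = \lambda(M_n)$ is the difference $\lambda(A/I^n) - \lambda(A/\overline{I^n})$ of the two Hilbert functions; since the integral closure filtration has the same multiplicity $e_0(I)$ as the $I$-adic one (the normal Hilbert polynomial, again a consequence of module finiteness), these two degree-$d$ polynomials share their leading term, so $\lambda(M_n)$ agrees for $n \gg 0$ with a polynomial of degree $\le d-1$, whence $\dim_{\R} M \le d$. The dichotomy I want is: \emph{either this polynomial has degree exactly $d-1$, or $M = 0$}. So suppose its degree is $< d-1$ (this subsumes the case where $\lambda(M_n)$ is eventually $0$). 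Then $\dim_\R M \le d-1$, and hence the submodule $(0 :_M u) \cong \ker\phi$ has dimension $\le d-1$ as well. But $\ker\phi$ is a homogeneous ideal of $G$, so every prime in $\Ass(\ker\phi)$ lies in $\Ass(G)$; since $G$ is unmixed and equidimensional, each such prime $\mathfrak p$ satisfies $\dim(G/\mathfrak p) = d$, which forces $\dim\ker\phi = d$ whenever $\ker\phi \neq 0$. Having dimension $\le d-1$ therefore forces $\ker\phi = 0$, i.e. $u$ is a nonzerodivisor on $M$.

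Finally, $M$ is concentrated in degrees $\ge 1$ while $u$ has degree $-1$, and a nonzerodivisor of negative degree on a module bounded below forces the module to vanish — inspect the lowest nonzero graded piece, which $u$ would have to inject into $0$. Hence $M = 0$, that is $\overline{I^n} = I^n$ for all $n \ge 1$; and conversely, if $M \neq 0$ the polynomial has degree exactly $d-1$, completing the dichotomy. I expect the main obstacle to be the dimension bookkeeping: carefully justifying that $n \mapsto \lambda(M_n)$ is eventually polynomial of degree $\dim_\R M - 1$ even though $\R_0 = A$ is not Artinian (using that each $M_n$ has finite length because $I$ is $\m$-primary), and matching the dimension of $\ker\phi$ as a $G$-module with that of $(0:_M u)$ as an $\R$-module. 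The conceptual heart, and the only place the hypothesis is truly used, is the observation that a nonzero homogeneous ideal of an unmixed equidimensional $G$ cannot have dimension less than $d$.
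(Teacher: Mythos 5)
Your argument is correct, but it takes a genuinely different route from the paper's. The paper embeds $E=\bigoplus_{n\ge 1}\overline{I^n}/I^n$ into the (non--finitely generated) module $L^{I}(A)(-1)=\bigoplus_{n\ge 0}A/I^{n+1}$ and proves, by filtering $L^{I}(A)$ by finitely generated submodules whose successive quotients are shifted copies of $G_I(A)$, that $\operatorname{Ass}_{\R}(L^{I}(A))=\operatorname{Ass}_{\R}(G_I(A))$ (Lemma \ref{Thm:ASS}); unmixedness and equidimensionality then force $\dim_{\R}E=d$ outright whenever $E\neq 0$, with no need to know beforehand that the Hilbert polynomial of $E$ has degree at most $d-1$. (The paper's stated proof also detours through the Ratliff--Rush identification $r(I^n,M)=\overline{I^n}$, but the real engine is Theorem \ref{key}.) You instead apply the unmixedness hypothesis only to the honest homogeneous ideal $\ker\bigl(G_I(A)\to \overline{G}\bigr)$, identified with $(0:_E u)$ via the snake lemma on the extended Rees algebras, and you close the loop with two extra inputs the paper does not need: the multiplicity identity $\overline{e}_0(I)=e_0(I)$, which supplies the a priori bound $\deg\le d-1$, and the observation that a module bounded below in degree admitting the degree $-1$ nonzerodivisor $u$ must vanish. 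Your version makes the containment $\operatorname{Ass}(\ker\phi)\subseteq\operatorname{Ass}(G_I(A))$ trivial and so avoids Lemma \ref{Thm:ASS} entirely, at the cost of the extended-Rees formalism and the leading-coefficient comparison; the paper's lemma is the more reusable tool, since Theorem \ref{key} applies verbatim to any pair of nested $I$-stable filtrations (tight closure, saturation) without equality of multiplicities. The two bookkeeping points you flag --- that $\lambda(E_n)$ is eventually a polynomial of degree $\dim_{\R}E-1$ because $\m^{s}E=0$ for some $s$, and that the snake-lemma isomorphism is $G_I(A)$-linear up to a degree shift --- are genuine but routine; the first is exactly the argument of Lemma \ref{Thm:upperbound}.
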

In fact, we prove Theorem \ref{second} for a more general class of filtrations and then deduce Theorem \ref{second}.
We give an example for $\ref{second}$  which shows that the hypothesis of unmixedness in Theorem \ref{second} cannot be dropped (see \ref{exam}).

Now, let $A$ be a ring of characteristic $p > 0$. If $I$ is an ideal in $A$, we set $I^*$ to be the tight-closure of $I$. We prove:
\begin{theorem}[Theorem \ref{Thm:tightclosure}]
\label{second-tight}
    Let $(A,\m)$ be an analytically unramified local ring of characteristic $p > 0$ with positive depth and $I$ an $\m$-primary ideal of $A.$ If $G_I(A)$ is unmixed and equidimensional then the function $n\mapsto \lambda((I^n)^*/{I^n})$ coincides with a polynomial of degree $d-1$ for all $n\gg0$ or $(I^n)^* =I^n$ for all $n\geq 1.$
\end{theorem}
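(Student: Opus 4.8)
\emph{Plan.} The strategy is to realize the tight closure filtration $\mathcal{F}^{*}=\{(I^n)^{*}\}_{n\ge 0}$ (with $(I^0)^{*}=A$) as one instance of the general class of filtrations for which Theorem \ref{second} is proved, and then to quote that general result. Concretely, I would first verify that $\mathcal{F}^{*}$ is an $I$-admissible filtration sitting inside the integral closure filtration, i.e. that $I^n\subseteq (I^n)^{*}\subseteq\overline{I^n}$ for all $n$, that it is decreasing and multiplicative, and that the Rees algebra $\R(\mathcal{F}^{*})=\bigoplus_{n\ge 0}(I^n)^{*}t^n$ is module finite over $\R=\bigoplus_{n\ge 0}I^nt^n$. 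Once admissibility is in place, the claimed dichotomy for $n\mapsto\lambda((I^n)^{*}/I^n)$ is precisely the conclusion of the general filtration theorem underlying Theorem \ref{second}, applied to $\mathcal{F}^{*}$.

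For admissibility I would invoke three standard facts about tight closure in characteristic $p$. First, $(I^{n+1})^{*}\subseteq(I^n)^{*}$ by monotonicity, so $\mathcal{F}^{*}$ is decreasing, and $(I^m)^{*}(I^n)^{*}\subseteq(I^{m+n})^{*}$ by the product property $J^{*}K^{*}\subseteq(JK)^{*}$, so it is multiplicative. Second, the tight closure of an ideal is always contained in its integral closure, so $(I^n)^{*}\subseteq\overline{I^n}$; together with $I^n\subseteq(I^n)^{*}$ this pins the filtration between $\R$ and $\overline{\R}(I)$, and also shows each $(I^n)^{*}$ is $\m$-primary. Third, since $A$ is analytically unramified, Rees's theorem \cite[Theorem 1.4]{Drees1961} gives that $\overline{\R}(I)$ is module finite over $\R$; as $\R(\mathcal{F}^{*})$ is an $\R$-submodule of the finitely generated $\R$-module $\overline{\R}(I)$ and $\R$ is Noetherian, $\R(\mathcal{F}^{*})$ is itself module finite over $\R$ (in particular a Noetherian ring). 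This sandwiching is the one genuinely tight-closure-specific step; everything downstream is formal.

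It remains to recall why admissibility together with the hypothesis on $G_I(A)$ produces the dichotomy, since this is where the geometric hypothesis is spent. Module finiteness makes $n\mapsto\lambda(A/(I^n)^{*})$ eventually a polynomial of degree $d$, and because $(I^n)^{*}\subseteq\overline{I^n}$ leaves the multiplicity unchanged, its leading term agrees with that of $n\mapsto\lambda(A/I^n)$; hence $\lambda((I^n)^{*}/I^n)=\lambda(A/I^n)-\lambda(A/(I^n)^{*})$ is eventually a polynomial of degree at most $d-1$, with leading coefficient governed by $e_1(\mathcal{F}^{*})-e_1(I)\ge 0$. The content is the rigidity that this degree is exactly $d-1$ unless $(I^n)^{*}=I^n$ for all $n$, equivalently that $e_1(\mathcal{F}^{*})=e_1(I)$ forces the filtration to be trivial. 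Unmixedness and equidimensionality enter here: every $\p\in\Ass(G_I(A))$ then has $\dim G_I(A)/\p=d\ge 1$, so no such $\p$ contains the irrelevant ideal $(G_I(A))_+$ (otherwise $G_I(A)/\p$ would be a quotient of the Artinian ring $A/I$), whence $[G_I(A)]_1\not\subseteq\p$. Consequently, after the standard faithfully flat extension $A\to A[X]_{\m A[X]}$ making the residue field infinite (and using that it preserves the hypotheses and the invariant $\lambda((I^n)^{*}/I^n)$), a general $x\in I$ is superficial with initial form $x^{*}\in[G_I(A)]_1$ a nonzerodivisor on $G_I(A)$.

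With such an $x$ I would run the argument by induction on $d$. In the base case $d=1$ one computes directly, using that $x$ is a nonzerodivisor, that $e_1(\mathcal{F}^{*})=\sum_{k\ge 1}\lambda((I^k)^{*}/x(I^{k-1})^{*})$ and the analogous formula for $I$; the equality $e_1(\mathcal{F}^{*})=e_1(I)$ then forces $(I^n)^{*}=I^n$ for $n\gg 0$, and the relation $(I^s:x^n)=I^{s-n}$ coming from $x^{*}$ being a nonzerodivisor upgrades this to all $n\ge 1$. For $d\ge 2$ one passes to $A/(x)$ with the induced filtration, where $x^{*}$ being a nonzerodivisor preserves $e_1$ of both filtrations and lowers the dimension by one. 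The main obstacle I anticipate is the descent of the hypotheses under this reduction: since $G_I(A)$ is catenary (a finitely generated algebra over the universally catenary ring $A/I$), equidimensionality does descend, but modding out a nonzerodivisor can create embedded primes, so unmixedness, and hence the positive depth needed to produce the next nonzerodivisor, may fail. Resolving this, either by showing the induction requires only equidimensionality plus the nonzerodivisor, or by reformulating the rigidity as the statement $\dim_{\R}\bigl(\R(\mathcal{F}^{*})/\R\bigr)\in\{-\infty,d\}$ and controlling $\Ass_{\R}\bigl(\R(\mathcal{F}^{*})/\R\bigr)$ directly, is the heart of the matter.
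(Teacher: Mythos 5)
Your first step---verifying that $\{(I^n)^*\}_{n\ge 0}$ is an $I$-stable filtration sandwiched between $\R$ and $\overline{\R}(I)$, hence that $E=\bigoplus_{n\ge 1}(I^n)^*/I^n$ is a finitely generated graded $\R$-module with finite-length components---is exactly the paper's first step, and your identification of the remaining content (the degree is exactly $d-1$ unless $E=0$) is also correct. But the mechanism you propose for that rigidity statement, induction on $d$ via a superficial element $x$ with $x^*$ a nonzerodivisor on $G_I(A)$, has a genuine gap, and it is the one you flag yourself: unmixedness of $G_I(A)$ need not pass to $G_{I(A/xA)}(A/xA)$ (killing a nonzerodivisor can create embedded primes), so the inductive hypothesis cannot be invoked and the positive depth needed to find the next superficial element may be lost. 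Since you explicitly defer "resolving this" as "the heart of the matter," the proof is not complete as written.

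The paper avoids induction entirely and instead carries out, in one stroke, precisely the second alternative you mention but do not execute: it controls $\Ass_{\R}(E)$ directly. The two ingredients are (i) the graded inclusion $E\hookrightarrow L^{I}(A)(-1)$ where $L^{I}(A)=\bigoplus_{n\ge 0}A/I^{n+1}$, which gives $\Ass_{\R}(E)\subseteq\Ass_{\R}(L^{I}(A))$; and (ii) the equality $\Ass_{\R}(L^{I}(A))=\Ass_{\R}(G_I(A))$, proved by writing $L^{I}(A)$ as an increasing union of finitely generated submodules $L_s$ fitting into exact sequences $0\to L_{s-1}\to L_s\to G_I(A)(-s)\to 0$ with $L_0=G_I(A)$. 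Once every associated prime of $E$ is an associated prime of $G_I(A)$, the unmixed and equidimensional hypothesis forces $\dim_{\R}E=d$ whenever $E\neq 0$, i.e.\ the Hilbert function of $E$ has degree exactly $d-1$; note this immediately yields $(I^n)^*=I^n$ for \emph{all} $n\ge 1$ in the degenerate case, with no separate argument for small $n$. If you want to salvage your write-up, replace the inductive step by this associated-primes argument; the superficial-element machinery and the reduction to an infinite residue field are not needed.
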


An example where $G_I(A)$ is both unmixed and equidimensional occurs when $I$ is generated by a system of parameters in a Cohen-Macaulay local ring, in which case $G_I(A)$ is itself Cohen-Macaulay, see \cite[Theorem 1.1.8]{Bruns}. More generally, if $A$ is an analytically unramified, generalized Cohen-Macaulay ring of positive dimension, and if $I$ is generated by a standard system of parameters (for the definition of standard system of parameter, see \ref{def}) of $A$, then $G_I(A)$ is also unmixed and equidimensional, see Remark \ref{gcm} and \cite[Theorem 5.4]{NVT}.

Let $\overline{e}_1(I)$ denote the first Hilbert coefficient associated with the integral closure filtration of $I$, and let $e_1^*(I)$ denote the first Hilbert coefficient corresponding to the tight closure filtration of $I$ (for the definition of Hilbert coefficients, see \ref{HC}). We prove:

\begin{theorem}[Theorem \ref{gcm-main}]\label{gcm-body}
    Let $(A,\m)$ be a generalized Cohen-Macaulay and analytically unramified local ring of dimension $d\geq 1$. Let $I$ be a standard system of parameters. Then,
\begin{enumerate}
     \item $$\overline{e}_1({I}) \geq -\left(\sum_{j=1}^{d-1}\binom{d-2}{j-1}\lambda(H^j_{\m}(A))\right).$$
    \item If the characteristic of $A$ is $p>0$, $$e^*_1(I)\geq -\left(\sum_{j=1}^{d-1}\binom{d-2}{j-1}\lambda(H^j_{\m}(A))\right).$$

\end{enumerate}
   If equality holds, then in the first case, $A$ is a regular local ring and in the second case, $A$ is an $F$-rational (if $A$ is a homomorphic image of a Cohen-Macaulay local ring).  Here $H^j_\m(A)$ denotes the $j$-{th} local cohomology module of $A$ with respect to $\m$.
\end{theorem}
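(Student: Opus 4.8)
The plan is to compare the integral-closure and tight-closure filtrations against the ordinary $I$-adic filtration and feed the dichotomy of Theorems \ref{second} and \ref{second-tight} into this comparison. Since $I$ is a standard system of parameters of a generalized Cohen--Macaulay ring, $G_I(A)$ is unmixed and equidimensional (Remark \ref{gcm}), so both theorems apply. Write $e_0(I), e_1(I)$ for the Hilbert coefficients of the $I$-adic filtration. Because $\overline{I^n}$ and $(I^n)^*$ have the same integral closure as $I^n$, all three filtrations share the multiplicity $e_0$, and
\[
\lambda(A/I^{n+1}) - \lambda(A/\overline{I^{n+1}}) = (\overline{e}_1(I) - e_1(I))\binom{n+d-1}{d-1} + \text{(lower-order terms)} \qquad (n \gg 0).
\]
The left-hand side is $\lambda(\overline{I^{n+1}}/I^{n+1})$, which by Theorem \ref{second} is either identically zero or a polynomial of degree exactly $d-1$ for $n \gg 0$; since lengths are nonnegative, this forces $\overline{e}_1(I) \geq e_1(I)$, with equality precisely when $\overline{I^n} = I^n$ for all $n$. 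The identical argument, using the inclusions $I^n \subseteq (I^n)^* \subseteq \overline{I^n}$ together with Theorem \ref{second-tight}, gives $e^*_1(I) \geq e_1(I)$, with equality precisely when $(I^n)^* = I^n$ for all $n$.

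The next step is to evaluate $e_1(I)$. For a standard system of parameters in a generalized Cohen--Macaulay local ring the Hilbert--Samuel polynomial has a closed form in terms of the lengths of the lower local cohomology modules; extracting its degree-$(d-1)$ coefficient yields
\[
e_1(I) = -\sum_{j=1}^{d-1}\binom{d-2}{j-1}\lambda(H^j_{\m}(A)).
\]
I will cite this computation (cf.\ \cite{NVT}). Substituting it into the two inequalities of the previous paragraph proves (1) and (2) simultaneously.

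It remains to treat the equality cases. Equality in (1) forces $\overline{I^n} = I^n$ for every $n \geq 1$, so the integral-closure filtration coincides with the $I$-adic one and $I$ is a normal parameter ideal; I would then invoke the fact that a parameter ideal all of whose powers are integrally closed can occur only in a regular local ring, passing first through Cohen--Macaulayness and then using that an integrally closed parameter ideal in a Cohen--Macaulay local ring forces regularity. Equality in (2) forces $(I^n)^* = I^n$ for all $n$, so in particular the parameter ideal $I$ is tightly closed; by the colon-capturing property of tight closure for (standard) parameters one concludes that $A$ is Cohen--Macaulay, and a Cohen--Macaulay local ring that is a homomorphic image of a Cohen--Macaulay ring and admits a tightly closed parameter ideal is $F$-rational.

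The main obstacle I anticipate is exactly this last paragraph: the passage from ``$\overline{I^n} = I^n$ for all $n$'' to regularity, and from ``$(I^n)^* = I^n$ for all $n$'' to $F$-rationality. The inequalities and the $e_1$-formula are essentially bookkeeping once Theorems \ref{second} and \ref{second-tight} are in hand, but the equality statements require the structural input that a parameter ideal with trivial integral (respectively tight) closure detects first the Cohen--Macaulay and then the regular (respectively $F$-rational) property; the hypothesis that $A$ is a homomorphic image of a Cohen--Macaulay ring is used precisely to make the $F$-rationality criterion available.
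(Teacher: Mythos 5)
Your proposal is correct and follows essentially the same route as the paper: it derives $\overline{e}_1(I)\geq e_1(I)$ and $e_1^*(I)\geq e_1(I)$ by comparing Hilbert polynomials and invoking the degree dichotomy of Theorems \ref{second} and \ref{second-tight} (available because $G_I(A)$ is unmixed and equidimensional for a standard system of parameters), evaluates $e_1(I)$ via Trung's formula from \cite{NVT}, and handles equality by reducing to $\overline{I^n}=I^n$ (resp.\ $(I^n)^*=I^n$) for all $n$. The structural inputs you flag for the equality cases are exactly the ones the paper uses, namely Goto's theorem on integrally closed parameter ideals and the characterization of $F$-rationality via tightly closed parameter ideals as in \cite[Proposition 10.3.5]{Bruns}.
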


Let $I$ be an ideal of $A$. The saturation of $I$ is denoted as $I^{sat}$ and defined by $$I^{sat}=I:\m^{\infty}=\bigcup_{i\geq 0}(I:\m^i).$$ We then prove:

 \begin{theorem}[Theorem \ref{thm:saturated}]
 \label{third}
      Let $(A,\m)$ be an excellent normal local domain of dimension $d\geq 1$. Let $I$ be an ideal of $A$. If ${G}_{I}(A)$ is unmixed and equidimensional, then one of the following two statements holds:
    \begin{enumerate}[ \rm (1)]
        \item $(I^n)^{sat}=I^n$ for all $n\geq 1.$
        \item $\ell(I)=d$.
    \end{enumerate}
 \end{theorem}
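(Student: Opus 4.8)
The plan is to reduce the statement to a dichotomy on the $\m$-power torsion of $G := G_I(A)$, where $\m$ acts through the degree-zero component $G_0 = A/I$. First I would record the local-cohomology reformulation of saturation: for each $n$ one has $(I^n)^{sat}/I^n = H^0_\m(A/I^n)$, a module of finite length, so that statement (1) of the theorem is exactly the assertion that $H^0_\m(A/I^n) = 0$ for all $n \geq 1$. I would then split into two cases according to whether $H^0_\m(G) = \bigoplus_{n \geq 0} H^0_\m(I^n/I^{n+1})$ vanishes, noting that this graded decomposition is legitimate because $\m$ acts degree-preservingly on $G$.

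Suppose first that $H^0_\m(G) = 0$. Then every graded piece $H^0_\m(I^{n-1}/I^n)$ vanishes, and I would induct on $n$ using the short exact sequences $0 \to I^{n-1}/I^n \to A/I^n \to A/I^{n-1} \to 0$: the associated long exact sequence in $H^\bullet_\m(-)$ yields an injection $H^0_\m(A/I^n) \hookrightarrow H^0_\m(A/I^{n-1})$, while the base case $H^0_\m(A/I) = H^0_\m(G_0) = 0$ is part of the vanishing of $H^0_\m(G)$. Hence $H^0_\m(A/I^n) = 0$ for all $n$, which is statement (1).

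Suppose instead that $H^0_\m(G) \neq 0$. Here I would exploit the interplay between $\m$-torsion and the fiber cone $F(I) = G/\m G$, whose Krull dimension is $\ell(I)$ by the definition of analytic spread. Choose a homogeneous associated prime $P \in \Ass(H^0_\m(G)) \subseteq \Ass(G)$; writing $P = (0 :_G g)$ with $g$ homogeneous and $\m^k g = 0$, the annihilator contains the image of $\m^k$ in $G_0$, so $\m/I \subseteq P \cap G_0$, and since $P$ is an ideal of $G$ this forces $\m G \subseteq P$. Consequently $G/P$ is a quotient of $F(I) = G/\m G$, giving $\dim G/P \leq \ell(I) \leq \dim G = d$. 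But $P \in \Ass(G)$, so the hypothesis that $G_I(A)$ is unmixed and equidimensional gives $\dim G/P = d$, whence $\ell(I) = d$, which is statement (2). Since these two cases exhaust all possibilities, the dichotomy follows.

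The crux, and the step I expect to be the main obstacle, is the torsion-to-fiber-cone passage in the second case: one must resist arguing with depth relative to the graded maximal ideal $\mathcal{M} = \m/I \oplus G_+$. The hypothesis does yield $\mathcal{M} \notin \Ass(G)$ and hence $H^0_\mathcal{M}(G) = 0$, but torsion for the degree-zero action of $\m$ is genuinely different from $\mathcal{M}$-torsion, as already the case $I = \m$ shows (there $\m G = 0$ but $G_+ \neq 0$). The correct bookkeeping is to track the single degree in which a torsion element lives and to verify that its associated prime contains $\m G$, so that it corresponds to a prime of the fiber cone and equidimensionality can be invoked. I would also remark that, beyond $d \geq 1$ (so that $\ell(I) = d$ is the genuine top value) and the equidimensionality of $G_I(A)$, the normality, excellence, and domain hypotheses on $A$ are not essential to this argument; they serve mainly to make the equidimensionality assumption natural, consistent with the observation in \ref{UE} that it holds whenever $A$ is universally catenary and equidimensional.
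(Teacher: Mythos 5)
Your argument is correct, and it takes a genuinely different route from the paper's. The paper deduces the theorem from its general machinery: it invokes the Cutkosky--Herzog--Srinivasan theorem to get that the saturated Rees algebra $\R^{sat}=\bigoplus_{n\geq 0}(I^n)^{sat}t^n$ is a finite $\R$-module whenever $\ell(I)<d$ (this is precisely where the excellent normal domain hypothesis is consumed), sets $E=\R^{sat}/\R$, and then plays the degree bound $\deg\lambda(E_n)\leq \ell(I)-1\leq d-2$ from Lemma \ref{Thm:upperbound} against Theorem \ref{key}, which forces $\dim_\R E=d$ and hence degree $d-1$ via the transfer of associated primes $\operatorname{Ass}_\R(E)\subseteq\operatorname{Ass}_\R(L^{I}(A))=\operatorname{Ass}_\R(G_I(A))$. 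Your proof bypasses the finite generation of $\R^{sat}$ entirely: the vanishing case $H^0_\m(G_I(A))=0$ gives conclusion (1) by left-exactness of $H^0_\m$ and induction, and in the non-vanishing case a homogeneous associated prime $P=\operatorname{ann}(g)$ with $\m^kg=0$ necessarily contains $\m G_I(A)$ (since $P$ is prime), so $G_I(A)/P$ is a quotient of the fiber cone and equidimensionality forces $d=\dim G_I(A)/P\leq\ell(I)\leq d$. This is more elementary and, as you note, strictly more general --- the normality, excellence, and domain hypotheses really are only needed for the paper's route through $\R^{sat}$. The trade-off is that your argument is tailored to the saturation filtration (it exploits that $(I^n)^{sat}/I^n=H^0_\m(A/I^n)$ and that $\m$-torsion is detected degree by degree on the associated graded ring), so unlike the paper's method it does not simultaneously yield the polynomial-growth dichotomies of Theorems \ref{Thm:integralclosure} and \ref{Thm:tightclosure}; but for the statement at hand it is a clean improvement.
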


We now briefly describe the content of the paper. In Section 2, we explain the necessary preliminaries. In Section 3, we will first prove some results needed for our main theorem. Finally, we sketch a proof of Theorem \ref{second}. In section 4, we give a proof of the theorem \ref{third} and prove that a result analogous to Theorem \ref{second} is true for some non $\m$-primary ideals. Finally, in section five we prove Theorem \ref{gcm-body}.

\section{Preliminaries}
Throughout this paper, all rings considered are Noetherian, and all modules considered, unless stated otherwise, are finitely generated. We will use \cite{Bruns} as a general reference.
Also, $\lambda(M)$ denotes the length of an $A$-module.
\begin{point}
 \normalfont
 Let $(A, \m)$ be a local ring.
   An element $x\in  I$ is called $A$-superficial with respect to $I$ if there exists $c\in \N$ such that for all $n \geq c$, $$(I^{n+1}:x)\cap I^c = I^n.$$ If depth$(I, A)>0$, then one can show that an $A$-superficial element is $A$-regular. Furthermore, in this case $$(I^{n+1}:x) = I^n \text{ for all } n\gg0.$$ Superficial elements exist if the residue field is infinite, see \cite[Page 86]{hccmm}.
\end{point}

\begin{point}\label{finitm}
\normalfont
 A filtration of ideals $\F=\{I_n\}_{n\geq 0}$ is said to be $I$-filtration if it satisfies the following conditions:
 \begin{enumerate}
     \item $I_0=A$ and $I_{n+1}\subseteq I_n$ for all $n\geq 0.$
     \item $I\subseteq I_1$ and $I_1\neq A.$
     \item $I_nI_m\subseteq I_{n+m}$ for all $n,m\geq 0$.
 \end{enumerate}
 In addition, if $II_n=I_{n+1}$ for all $n\gg0$ then the $I$-filtration $\F$ is called as an $I$-stable filtration.
\end{point}

It is well known that if $\F$ is an $I$-stable filtration, then  the Rees algebra $\R(\F)=\bigoplus_{n\geq 0}I_nt^n$ is a finite module over $\R=A[It]=\bigoplus_{n\geq 0}I^nt^n$ and hence $\R(\F)\subseteq \overline{\R}(I)=\bigoplus_{n\geq 0}\overline{I^n}t^n=$the integral closure Rees algebra of $\R.$

\begin{point}
    \normalfont
    A filtration of submodules $\{M_n\}_{n\geq 0}$ of an $A$-module $M$ is said to be $I$-filtration if it satisfies the following conditions:
    \begin{enumerate}
        \item $M_0=M$ and $M_{n+1}\subseteq M_n$ for all $n\geq 0$.
        \item $IM_n\subseteq M_{n+1}$ for all $n\geq 0.$

    \end{enumerate}
In addition, if $IM_n=M_{n+1}$ for all $n\gg0$ then the $I$-filtration $\{M_n\}_{n\geq 0}$ is an $I$-stable filtration.
\end{point}
\begin{point}
\normalfont
 Let $(A,\m)$ be a Noetherian local ring. Let $\F = \{I_n\}_{n\geq 0}$ be an $I$-stable filtration of $A$. Set ${L}^{\F}=\bigoplus_{n\geq 0}A/{I_{n+1}}$. Let $\R=\bigoplus_{n\geq 0}{I^n}t^n$ and  ${\R}(\F)=\bigoplus_{n\geq 0}{I_n}t^n$ be the Rees algebras with respect to the filtration $\{I^n\}_{n\geq 0}$ and $\{I_n\}_{n\geq 0}$, respectively. Since ${\R}(\F)$ is a subring of $A[t]$, the ring $A[t]$ is naturally a ${\R}(\F)$-module. We have the exact sequence of $\R(\F)$-modules $$0\to {\R}(\F)\to A[t]\to {L}^{\F}(A)(-1)\to 0 .$$ It follows that  ${L}^{\F}(A)(-1)$ is an $\R(\F)$-module. Moreover, since $\R\subseteq \R(\F)$, we see that ${L}^{\F}(A)$ is also an $\R$-module, for more details see \cite[4.2]{Part1}.
 Note, however, that $L^\F(A)$ is \emph{not} a finitely generated $\R(\F)$-module.
\end{point}

\begin{point}
\normalfont
Let $A$ be a ring and $I$ an ideal of $A$. Let $M$ be an $A$-module. Consider the following ascending chain of ideals in $A$: $$I\subseteq (IM:M)\subseteq (I^2M:IM)\subseteq (I^3M:I^2M)\subseteq \ldots \subseteq(I^{n+1}M:I^nM) \ldots .$$ Since $A$ is Noetherian, this chain stabilizes. The stable value is denoted by $r(I, M)$. It is easy to prove that the filtration of ideals $\mathcal{F}^I_M=\{r(I^n, M)\}_{n\geq 0}$ forms an $I$-filtration, see \cite[Theorem 2.1]{Zulfeqarr}.
\end{point}

Now, consider the following chain of submodules of $M$: $$IM\subseteq (I^2M:_MI)\subseteq (I^3M:_MI^2)\subseteq \ldots \subseteq (I^{n+1}M:_MI^n) \ldots .$$ Since $M$ is Noetherian, this chain of submodules stabilizes. The stable value is denoted by $\widetilde{IM}$. It can be proved that the filtration $\{\widetilde{I^nM}\}_{n\geq 0}$ is an $I$-filtration.

\begin{remark}
\label{rmk:tildeI^nM}
\begin{enumerate}
    \item If grade$(I, M)>0$, then for all $n\gg0$,  $I^nM=\widetilde{I^nM}.$ In particular, $\{\widetilde{I^nM}\}_{n\geq 0}$ is an $I$-stable filtration, see \cite[Lemma 2.1]{tildeI^nM}.
    \item If grade$(G_I(A)_+, G_I(M))>0$ then $I^nM=\widetilde{I^nM}$ for all $n\geq 1$, see \cite[fact 9]{Heinzer}.
\end{enumerate}
\end{remark}
Recall that an ideal $I$ of $A$ is said to be regular if it contains a non-zero divisor.
\begin{theorem}\cite[Theorem 5.9]{Zulfeqarr}
  \normalfont
  \label{Thm:Istable}
      Let $I$ be a regular ideal of $A.$ If ann$(M)=0$ then $\mathcal{F}^I_M=\{r(I^n, M)\}_{n\geq 0}$ is an $I$-stable filtration.
  \end{theorem}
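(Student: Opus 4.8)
The plan is to prove the sharper statement that $r(I^n,M)=C_n$ for all $n\gg 0$, where $C_n:=(I^nM:_AM)$, and that $\{C_n\}_{n\ge 0}$ is itself $I$-stable; since the two filtrations then coincide in all large degrees, $\F^I_M$ is $I$-stable. Note first that $C_n=(I^nM:_AI^0M)$ is the bottom term of the ascending chain whose stable value defines $r(I^n,M)$, so $C_n\subseteq r(I^n,M)$ automatically. Thus the work splits into two parts: the stability of $\{C_n\}$, and the reverse inclusion $r(I^n,M)\subseteq C_n$ for $n\gg 0$.

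For the first part I would use $\ann(M)=0$. Choosing generators $m_1,\dots,m_\mu$ of $M$, the map $a\mapsto(am_1,\dots,am_\mu)$ embeds $A$ as a submodule of $P:=M^{\mu}$, and under this embedding $C_n=A\cap I^nP$. Applying the Artin--Rees lemma to the submodule $A\subseteq P$ produces a constant $c$ with $A\cap I^nP=I^{\,n-c}(A\cap I^cP)$ for $n\ge c$; that is, $C_n=I^{\,n-c}C_c$, whence $IC_n=C_{n+1}$ for $n\ge c$ and $\{C_n\}$ is $I$-stable.

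Before attacking the reverse inclusion I would reduce to the case $\grade(I,M)>0$. Let $\Gamma$ be the $I$-power-torsion submodule of $M$ and $\overline M=M/\Gamma$. Since $\Gamma$ is finitely generated it is annihilated by a fixed power of $I$, so Artin--Rees gives $I^mM\cap\Gamma=0$ for $m\gg 0$; hence $M\twoheadrightarrow\overline M$ restricts to an isomorphism $I^mM\xrightarrow{\ \sim\ }I^m\overline M$ for $m\gg 0$, and a short check shows $(I^{m+n}M:_AI^mM)=(I^{m+n}\overline M:_AI^m\overline M)$ for $m\gg 0$, so that $r(I^n,M)=r(I^n,\overline M)$ for every $n$. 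Moreover $\overline M$ is again faithful --- if $a\overline M=0$ then $aM\subseteq\Gamma$ is killed by some $I^k$, so $aI^k=0$ and hence $a=0$ because $I$ is regular --- and $\overline M\ne 0$ since $I^kM=0$ would force $I^k\subseteq\ann(M)=0$. Replacing $M$ by $\overline M$, we may assume $\grade(I,M)>0$.

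The crux is then the inclusion $r(I^n,M)\subseteq C_n$ for $n\gg 0$. After the harmless faithfully flat base change $A\rightsquigarrow A[X]_{\m[X]}$, which preserves every hypothesis and commutes with the colons defining $r(I^n,M)$ while making the residue field infinite, I would choose $x\in I$ that is both superficial for $I$ with respect to $M$ and $M$-regular; for such $x$ one has the cancellation $(I^{m+1}M:_Mx)=I^mM$ for all $m\ge j_0$. Now take $a\in r(I^n,M)$. Then $ax^kM\subseteq aI^kM\subseteq I^{n+k}M$ for some $k$, and peeling off one factor of $x$ at a time --- each step legitimate because $(I^{j+1}M:_Mx)=I^jM$ for $j\ge j_0$ --- descends to $aM\subseteq I^nM$, i.e. $a\in C_n$, as soon as $n\ge j_0$. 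This descending cancellation is the step I expect to be most delicate: it is precisely what forces the reduction to positive grade, since only there can one produce a superficial $M$-regular element and thereby control $(\,\cdot:_Mx^k)$ rather than the weaker $(\,\cdot:_MI^k)$. Combining the three parts gives $r(I^n,M)=C_n$ for all large $n$, and the $I$-stability of $\{C_n\}$ carries over to $\F^I_M$.
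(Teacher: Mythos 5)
The paper does not prove this statement: it is imported verbatim from \cite[Theorem 5.9]{Zulfeqarr}, so there is no in-paper argument to measure yours against, and I have assessed your proof on its own. It is correct. All three steps work: faithfulness gives the embedding $A\hookrightarrow P=M^{\mu}$ under which $C_n=(I^nM:_AM)=A\cap I^nP$, and Artin--Rees yields $C_{n+1}=IC_n$ for $n\ge c$; the passage to $\overline M=M/\Gamma$ (with $\Gamma$ the $I$-torsion) preserves faithfulness precisely because $I$ is regular, preserves $r(I^n,-)$ for every $n$ because $I^mM\cap\Gamma=0$ for $m\gg0$ uniformly in $n$, and achieves $\operatorname{grade}(I,\overline M)>0$; and the descent from $x^k(aM)\subseteq I^{n+k}M$ to $aM\subseteq I^nM$ via $(I^{j+1}M:_Mx)=I^jM$ for $j\ge j_0$ is legitimate for $n\ge j_0$, since every intermediate exponent stays $\ge n$. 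Combined with the fact that $\mathcal{F}^I_M$ is already an $I$-filtration (\cite[Theorem 2.1]{Zulfeqarr}, quoted in Section 2), eventual agreement with the $I$-stable filtration $\{C_n\}$ is exactly what is needed. Pinning $r(I^n,M)$ to the Artin--Rees-controlled colon ideals $(I^nM:_AM)$ by means of a superficial element is, as far as I can tell, essentially the strategy of the cited source as well.

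Two minor caveats, neither a genuine gap. First, as written your argument requires $(A,\mathfrak{m})$ local: both the base change $A\rightsquigarrow A[X]_{\mathfrak{m}[X]}$ and the superficial element presuppose this, whereas the statement is phrased for an arbitrary Noetherian ring. This costs nothing for the present paper, which only invokes the theorem for local rings; in general one replaces $x$ by a filter-regular element, i.e.\ $x\in I$ avoiding $\operatorname{Ass}(M)$ and every prime of $\bigcup_{n}\operatorname{Ass}(M/I^nM)$ not containing $I$ (available by prime avoidance once $\operatorname{grade}(I,M)>0$), for which the same cancellation $(I^{j+1}M:_Mx)=I^jM$ for $j\gg0$ holds. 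Second, after replacing $M$ by $\overline M$ you should apply your first step to $\overline M$, so that the stable filtration in play is $\{(I^n\overline M:_A\overline M)\}$; this is harmless since you verified that $\overline M$ is again faithful, but the final sentence of your write-up conflates the two $C_n$'s.
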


 Let $\R=A[It]=\bigoplus_{n\geq 0}I^nt^n$ be the Rees ring. By Remark \ref{rmk:tildeI^nM} and Theorem \ref{Thm:Istable}, if $I$ is regular and ann$(M)=0$ then $\R({\F^I_M})=\bigoplus_{n\geq 0}r(I^n,M)t^n$ and $\wt{\R} =\bigoplus_{n\geq 0}\wt{I^n}t^n$ are finitely generated $\R$-module. Therefore, the quotient module ${\R({\F^I_M})}/{\wt{\R}}=\bigoplus_{n\geq 1}{r(I^n,M)}/{\wt{I^n}}$ is a finitely generated $\R$-module.

\begin{definition}\cite[Theorem 1]{kishor}
\normalfont
    Let $(A,\m)$ be a quasi-unmixed local ring of dimension $d\geq 1$ with infinite residue field. Let $I$ an $\m$-primary ideal of $A.$ Then there exist unique largest ideals $I_{(k)}$ for $1\leq k\leq d$ containing $I$ such that
    \begin{enumerate}
        \item $e_i(I)=e_i(I_{(k)})$ for $0\leq i\leq k$, where $e_i(I)$ is the $i$-th Hilbert coefficient of $G_I(A)$ (see \ref{HC}, for the definition of Hilbert coefficient).
        \item $I\subseteq I_{(d)}\subseteq \ldots \subseteq I_{(1)}\subseteq \overline{I}=\text{ integral closure of }I.$
    \end{enumerate}
    The ideal $I_{(k)}$ is called as $k$-th coefficient ideal of $A.$
\end{definition}
We now discuss the conditions for $G_I(A)$ to be unmixed and equidimensional.
\begin{proposition}
\label{UE}
     Let $(A,\m)$ be a Noetherian local ring of dimension $d$ and $I$ an ideal of $A.$
    \begin{enumerate}[\rm (1)]
     \item If $A$ is equidimensional and universal catenary then $G_{I}(A)$ is equidimensional.
        \item If $A$ is quasi unmixed with infinite residue field  and $I$ is $\m$-primary then  $G_{I}(A)$ is unmixed iff $I^n=I^n_{(1)}$ for every $n$, where $I^n_{(1)}$ is the first coefficient ideal of $I^n$.
    \end{enumerate}
\end{proposition}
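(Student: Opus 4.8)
The plan is to handle both parts by passing to the extended Rees algebra $S=A[It,t^{-1}]=\bigoplus_{n\in\Z}I^nt^n$ (with $I^n=A$ for $n\le 0$), for which $G_I(A)\cong S/t^{-1}S$ and, since $\bigcap_n I^n=0$, the element $t^{-1}$ is a nonzerodivisor on $S$. Note that $S$ is a finitely generated $A$-algebra of dimension $d+1$, so it is universally catenary whenever $A$ is, and in particular catenary.

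For part (1), I would first check that $S$ is equidimensional: every minimal prime of $S$ avoids the nonzerodivisor $t^{-1}$, so the minimal primes of $S$ correspond to those of $S[\,1/t^{-1}\,]=A[t,t^{-1}]$, i.e.\ to the minimal primes $\mathfrak p$ of $A$, and the associated quotient $S\twoheadrightarrow (A/\mathfrak p)[\overline{I}\,t,t^{-1}]$ has dimension $\dim A/\mathfrak p+1=d+1$ because $A$ is equidimensional. Localizing at the unique graded maximal ideal $\mathfrak N$ of $S$, which contains every homogeneous and hence every minimal prime, yields a catenary equidimensional local ring $S_{\mathfrak N}$ of dimension $d+1$; such a ring satisfies the dimension formula $\htt P+\dim(S_{\mathfrak N}/P)=d+1$ for every prime $P$. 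Applying this to a prime $P$ minimal over $(t^{-1})$, Krull's principal ideal theorem gives $\htt P=1$ (as $t^{-1}$ is a nonzerodivisor), hence $\dim S/P=d$. Since the minimal primes of $G_I(A)=S/t^{-1}S$ are exactly the images of such $P$, every minimal prime $P$ of $G_I(A)$ satisfies $\dim G_I(A)/P=d$, i.e.\ $G_I(A)$ is equidimensional. (Alternatively one may invoke Ratliff's theorem that ``universally catenary and equidimensional'' coincides with ``quasi-unmixed,'' together with the stability of quasi-unmixedness under passage to the form ring.)

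For part (2), I would use that $A$ quasi-unmixed is universally catenary and equidimensional, so by part (1) the ring $G_I(A)$ is already equidimensional; thus ``$G_I(A)$ unmixed'' is exactly the condition that $G_I(A)$ has no embedded primes, i.e.\ that it satisfies Serre's condition $S_1$. Since $t^{-1}$ is a nonzerodivisor, $S/t^{-1}S$ satisfies $S_1$ precisely when $S$ satisfies $S_2$ along $V(t^{-1})$, equivalently when $S$ agrees with its $S_2$-ification (which alters $S$ only in codimension $\ge 2$, exactly at the height-$\ge 2$ primes of $S$ containing $t^{-1}$). The remaining, and central, step is the numerical identification of this $S_2$-ification: by the theory of coefficient ideals for quasi-unmixed rings (Shah) and its interpretation via the $S_2$-ification of the Rees algebra $\R(I)$ (Ciuperc\u{a}), the degree-$n$ graded component of the $S_2$-ification is the first coefficient ideal $(I^n)_{(1)}$. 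Hence $S$ coincides with its $S_2$-ification if and only if $I^n=(I^n)_{(1)}$ for every $n$, which yields the stated equivalence.

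I expect the main obstacle to be precisely this last identification in part (2): matching all graded pieces of the $S_2$-ification of the (extended) Rees algebra with the coefficient ideals $(I^n)_{(1)}$ \emph{simultaneously for every} $n$, and verifying that the single Serre condition $S_2$ on $S$ encodes the whole family of equalities $I^n=(I^n)_{(1)}$ rather than a condition on $I$ alone. A secondary technical point is the justification of the dimension formula in the graded setting used in part (1), which I resolve by localizing at the graded maximal ideal and invoking that a catenary equidimensional local ring satisfies $\htt P+\dim(R/P)=\dim R$ for all primes $P$.
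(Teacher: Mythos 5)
Your part (1) is essentially the paper's own argument, in expanded form: both proofs pass to the extended Rees algebra $\hat{\R}=A[It,t^{-1}]$, use that $t^{-1}$ is a nonzerodivisor with $G_I(A)\cong\hat{\R}/t^{-1}\hat{\R}$, and deduce $\dim\hat{\R}/P=(d+1)-\operatorname{height}P=d$ for each minimal prime $P$ of $(t^{-1})$ from catenarity and equidimensionality of $\hat{\R}$. The paper cites \cite[Lemma 4.5.5]{Bruns} for the latter and a graded analogue of \cite[Lemma 2, p.~250]{matsumura} for the height formula; your direct verifications (identifying the minimal primes of $\hat{\R}$ by inverting $t^{-1}$, and the dimension formula for a catenary equidimensional local ring after localizing at the graded maximal ideal) are correct, so this part is fine.

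Part (2) is where the problem lies. The paper offers no argument there, simply citing \cite[Theorem 4]{kishor}; you instead sketch a route through the $S_2$-ification of $\hat{\R}$. Beyond the fact that the key identification (the graded pieces of the $S_2$-ification being the first coefficient ideals) is left as a black box---so your argument is at best a different citation, not a proof---the pivot of your reduction is false: ``$\hat{\R}$ satisfies $S_2$ along $V(t^{-1})$'' is \emph{not} equivalent to ``$\hat{\R}$ agrees with its $S_2$-ification,'' because the $S_2$-ification may modify $\hat{\R}$ at primes not containing $t^{-1}$. Concretely, take $A=k[[x,y,z,w]]/\big((x,y)\cap(z,w)\big)$ and $I=\m$: this $A$ is complete and equidimensional, hence quasi-unmixed, and $G_{\m}(A)\cong k[x,y,z,w]/\big((x,y)\cap(z,w)\big)$ is unmixed and equidimensional; yet $\depth A=1<2=\dim A$, so at the prime of $\hat{\R}$ corresponding to $\m A[t,t^{-1}]$ (which does not contain $t^{-1}$) the localization has dimension $2$ and depth $1$, so $\hat{\R}$ fails $S_2$ and is strictly smaller than its $S_2$-ification even though $G_{\m}(A)$ is unmixed. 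Thus only the implication ``$\hat{\R}$ equals its $S_2$-ification $\Rightarrow G_I(A)$ is $S_1$'' survives your setup; the converse, which is exactly the direction needed to extract $I^n=(I^n)_{(1)}$ from unmixedness of $G_I(A)$, does not follow. The same example shows that whatever version of the $S_2$-ification/coefficient-ideal identification you invoke must either carry hypotheses beyond quasi-unmixedness or concern a closure taken only along $V(t^{-1})$; to repair the argument you would have to work with that localized condition directly, or follow Shah's original Hilbert-polynomial proof.
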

\begin{proof}
     (1) The extended Rees ring $(\hat{\R}=A[It,t^{-1}])$ is equidimensional and catenary, see \cite[ Lemma 4.5.5]{Bruns}. Note that $t^{-1}$ is a non zero divisor of $\hat{\R}$. Let $\p \in {\Spec}(\R)$ be a minimal prime ideal of $(t^{-1}).$  Let $\mathfrak{M}$ be the graded maximal ideal of $\hat{\R}=A[It,t^{-1}]$.
     Then, by a graded analogue of \cite[Lemma 2, p.\ 250]{matsumura}, we have $\text{height}(\mathfrak{M}/\p) = \text{height}\ \mathfrak{M} - \text{height} \ \p = d+1 -1 = d$.
      Hence $G_I(A)$ is equidimensional.

      (2) For the proof of the second part, see \cite[Theorem 4]{kishor}.
\end{proof}

\begin{remark}\cite{Brodmann}
    Let $A$ be a Noetherian ring and $I$ an ideal of $A$. The set of associated primes $\operatorname{Ass}_A(A/I^n)$ stabilizes for sufficiently large $n$. The stable set is denoted by $A^*(I)$.
\end{remark}

\textbf{Notations:}
\begin{equation*}
    \begin{split}
       & \mathcal{F}^I_M=\{r(I^n,M)\}_{n\geq 0}, \,  \R(\F^I_M)=\bigoplus_{n\geq 0}r(I^n,M)t^n, \, \R=\bigoplus_{n\geq 0}I^nt^n, \, \wt{\R}=\bigoplus_{n\geq 0}\wt{I^n}t^n,   \\ & \F=\{I_n\}_{n\geq 0},  {\R}(\F)=\bigoplus_{n\geq 0}{I_n}t^n, \, {L}^{\F}(A)=\bigoplus_{n\geq 1}A/{I_{n+1}}, \, {G}_{\F}(A)=\bigoplus_{n\geq 0}{I_n}/{I_{n+1}}.
    \end{split}
\end{equation*}
For $\F=\{I^n\}_{n\geq 0}$, we write $L^{\F}(A)=L^I(A)$ and $G_{\F}(A)=G_I(A).$

\section{Polynomial growth of finitely generated Rees modules}

 \emph{This section assumes that $M_{\p}$ is free for all $\p \neq \m$ and the ring $(A,\m)$ has positive depth}. See \cite[section 6]{Zulfeqarr} for an example of such modules. We prove that if the associated graded ring of $A$ with respect to $I$ is unmixed and equidimensional, then the functions
$$n \mapsto \lambda(\overline{I^n}/I^n) \quad \text{and} \quad n \mapsto \lambda((I^n)^*/I^n)$$
are of polynomial type, and they attain their maximal degree whenever these functions are nonzero, where $\overline{I^n}$ and $(I^n)^*$ are integral closure and tight closure, respectively, of $I^n$ for all $n\geq 1$.

The authors proved the following theorem in \cite[Theorem 6.7]{Zulfeqarr}.

\begin{theorem}
 Let $I$ be a regular ideal of $A$. Then the function $n \to \lambda(r(I^n,M)/\widetilde{I^n})$ is of polynomial type. Furthermore, if $A^*(I) \cap \mathfrak{m}{\Spec}(A) = \phi$ then $r(I^n,M)=\widetilde{I^n}$ for all $n\geq 1$.
\end{theorem}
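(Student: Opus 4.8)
The plan is to extract everything from the single finitely generated graded $\R$-module
$N := \R(\F^I_M)/\wt{\R} = \bigoplus_{n\geq 1} r(I^n,M)/\wt{I^n}$, whose finite generation over $\R = A[It]$ was recorded just above (via Remark~\ref{rmk:tildeI^nM} and Theorem~\ref{Thm:Istable}, applicable because $I$ is regular and $\ann(M)=0$). One first checks that the displayed quotients are legitimate: if $a\,I^{nk}\subseteq I^{n(k+1)}$ then $a\,I^{nk}M\subseteq I^{n(k+1)}M$, so $\wt{I^n}\subseteq r(I^n,M)$. Thus each graded piece $N_n = r(I^n,M)/\wt{I^n}$ is a genuine quotient of ideals of $A$, and the object to be understood is the function $n\mapsto \lambda(N_n)$.

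The heart of the first statement is that $\lambda(N_n)<\infty$, and this is exactly where the standing hypothesis that $M_\p$ is free for $\p\neq\m$ enters; I expect this finiteness to be the main technical step. Localizing at a prime $\p\neq\m$, and using that both the colon operation and the stabilizations defining $r(-,-)$ and $\wt{(-)}$ commute with localization, one gets $(N_n)_\p = r(I^nA_\p, M_\p)/(\wt{I^n})_\p$. When $M_\p$ is free, the module colon $(I^{n(k+1)}M:I^{nk}M)_\p$ reduces to the ideal colon $(I^{n(k+1)}:I^{nk})A_\p$, so $(r(I^n,M))_\p = (\wt{I^n})_\p$ and $(N_n)_\p=0$. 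Hence $\operatorname{Supp}_A N_n\subseteq\{\m\}$ and $\lambda(N_n)<\infty$ for every $n$. Viewing $N$ over $\R$, the same localization shows $\operatorname{Supp}_\R N\subseteq V(\m\R)$, so $\m\R\subseteq\sqrt{\ann_\R N}$; as $N$ is Noetherian, $\m^t N=0$ for some $t\geq 1$. Therefore $N$ is a finitely generated graded module over the standard graded algebra $\R/\m^t\R$ whose degree-zero part $A/\m^t$ is Artinian, and the classical Hilbert--Serre theorem gives that $n\mapsto\lambda(N_n)$ agrees with a polynomial for $n\gg 0$; that is, it is of polynomial type.

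For the second statement, suppose $A^*(I)\cap\{\m\}=\emptyset$, i.e.\ $\m\notin A^*(I)$. Since each $N_n$ has finite length, a nonzero $N_n$ would force $\m\in\Ass_A N_n$. The plan is therefore to prove the containment $\Ass_A N_n\subseteq A^*(I)$ for every $n\geq 1$, tying the associated primes of the defect $r(I^n,M)/\wt{I^n}$ to the asymptotic associated primes of the powers of $I$; granting this, $\m\notin A^*(I)$ immediately yields $\Ass_A N_n=\emptyset$, hence $N_n=0$, for all $n\geq 1$, which is precisely $r(I^n,M)=\wt{I^n}$. Establishing this containment is the main obstacle: one must control the primes attached to the Ratliff--Rush-type defect in terms of $A^*(I)$, presumably by locating $\Ass_\R N$ among the homogeneous primes lying over $A^*(I)$ (note that every $P\in\Ass_\R N$ already satisfies $P\cap A=\m$ by the support computation above, so only the interaction with $A^*(I)$ remains to be pinned down). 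Once the containment is in hand, both conclusions follow formally from the finite-length and finite-generation properties established in the first two paragraphs.
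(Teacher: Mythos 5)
The paper does not actually prove this statement---it is imported verbatim from \cite[Theorem 6.7]{Zulfeqarr}---so there is no internal proof to measure you against; I will judge the argument on its own. Your treatment of the first assertion is correct and complete under the section's standing hypotheses ($\ann(M)=0$ and $M_\p$ free for $\p\neq\m$): the reduction $\wt{I^n}\subseteq r(I^n,M)$ is right, the localization computation showing $\operatorname{Supp}_A\bigl(r(I^n,M)/\wt{I^n}\bigr)\subseteq\{\m\}$ is exactly where freeness on the punctured spectrum enters (via $(JF:_A KF)=(J:_A K)$ for $F$ free of positive rank, noting $M_\p\neq 0$ because $\ann(M)=0$), and killing $\m^t$ and invoking Hilbert--Serre over the standard graded Artinian algebra $\R/\m^t\R$ is the same device the paper uses in Lemma \ref{Thm:upperbound}.

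The second assertion, however, is not proved. You correctly reduce it to the containment $\Ass_A\bigl(r(I^n,M)/\wt{I^n}\bigr)\subseteq A^*(I)$, but you then declare that containment to be ``the main obstacle'' and leave it as a plan---and it is the entire mathematical content of the second statement, since nothing else in your argument ties the Ratliff--Rush-type defect to the asymptotic primes of $I$. The gap is real but fillable by a short argument you should supply: $r(I^n,M)/\wt{I^n}$ embeds in $A/\wt{I^n}$; for $k\gg0$ one has $\wt{I^n}=(I^{n+k}:_A I^k)$, so $a\mapsto(y\mapsto ay+I^{n+k})$ embeds $A/\wt{I^n}$ into $\Hom_A(I^k,A/I^{n+k})$, whence $\Ass_A(A/\wt{I^n})\subseteq\Ass_A(A/I^{n+k})=A^*(I)$ once $k$ is large enough. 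Combined with your finite-length observation, which gives $\Ass_A(N_n)\subseteq\{\m\}$, the hypothesis $\m\notin A^*(I)$ forces $N_n=0$ for every $n\geq1$. As written, though, the proposal establishes only the first sentence of the theorem.
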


The following fact is well known. We give proof for the convenience of the reader.

\begin{lemma}
\label{Thm:upperbound}
\normalfont
Let $(A,\m)$ be a local ring. Let $\F=\{I_n\}_{n\geq 0}$ and $\F'=\{J_n\}_{n\geq 0}$ be two $I$-stable filtration of ideals of $A$ such that $J_n\subseteq I_n$ for all $n\geq 1$. Set $E=\R(\F)/\R{(\F')}=\bigoplus_{n\geq 0}I_n/J_n$. If $\lambda(E_n)<\infty$ for all $n\geq 1$ then the function $n\mapsto \lambda(E_n)$ is coincides with a polynomial of degree at most $\ell(I)- 1$ for all $n\gg0.$
\end{lemma}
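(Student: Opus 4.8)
The plan is to realize $E$ as a finitely generated graded module over a graded algebra that is generated in degree one over an Artinian ring, and then to invoke the Hilbert--Serre theorem, with the analytic spread entering through the fiber cone $\R/\m\R$. The technical heart is a uniform annihilation step that upgrades the pointwise finite-length hypothesis into a genuine module structure over an Artinian base.

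First I would note that, since $\F$ and $\F'$ are $I$-stable filtrations, both $\R(\F)$ and $\R(\F')$ are finite modules over $\R = A[It]$; as $J_n \subseteq I_n$ yields $\R(\F') \subseteq \R(\F)$, the quotient $E = \R(\F)/\R(\F')$ is a finitely generated graded $\R$-module. Choose finitely many homogeneous generators $e_1, \ldots, e_s$ with $e_i \in E_{d_i} = I_{d_i}/J_{d_i}$. By hypothesis each $E_{d_i}$ has finite length, so $\m^{k_i} e_i = 0$ for some $k_i$; setting $k = \max_i k_i$ and using that $\m \subseteq \R_0$ is central, every element $r e_i$ of $E$ satisfies $\m^k (r e_i) = r(\m^k e_i) = 0$. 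Hence $\m^k E = 0$, so $E$ is a finitely generated graded module over $S := \R/\m^k\R$, a ring generated in degree one over the Artinian local ring $A/\m^k$.

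Next I would identify the dimension of $S$. Since $(\m\R)^k = \m^k\R$, the image of $\m\R$ in $S$ is nilpotent, so $\dim S = \dim(\R/\m\R)$, the dimension of the fiber cone, which by definition equals the analytic spread $\ell(I)$. I would then apply the Hilbert--Serre theorem for a finitely generated graded module over a graded ring generated in degree one over an Artinian local ring: each $E_n$ has finite length, and $n \mapsto \lambda(E_n)$ agrees for $n \gg 0$ with a polynomial of degree $\dim_S E - 1$. Since $\dim_S E \leq \dim S = \ell(I)$, this degree is at most $\ell(I) - 1$, as claimed.

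I expect the main obstacle to be the uniform annihilation $\m^k E = 0$, which is precisely where the hypothesis $\lambda(E_n) < \infty$ is used: it is what converts the finiteness of each individual graded component into a module structure over the Artinian base $A/\m^k$, thereby making the Hilbert--Serre machinery available. Once that reduction is secured, matching the degree bound to $\ell(I)$ is a formal computation resting on the nilpotence of the image of $\m\R$ in $S$ together with the identification of $\dim(\R/\m\R)$ with the analytic spread.
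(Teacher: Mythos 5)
Your proof is correct and follows essentially the same route as the paper's: both hinge on deducing the uniform annihilation $\m^k E=0$ from the finite generation of $E$ over $\R$ together with the finite-length hypothesis on the graded pieces, and then bounding the degree by $\dim(\R/\m\R)-1=\ell(I)-1$. The only cosmetic difference is that the paper filters $E$ by the powers $\m^iE$ and sums the Hilbert polynomials of the quotients $\m^iE/\m^{i+1}E$, which are finitely generated modules over the fiber cone $\R/\m\R$, whereas you invoke Hilbert--Serre directly for $E$ over the Artinian base $A/\m^k$ and use the nilpotence of $\m\R/\m^k\R$ to identify the relevant dimension with $\ell(I)$.
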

\begin{proof}
   Since $\F$ and $\F'$ are $I$-stable filtrations, the Rees algebras $\R(\F)$ and $\R(\F')$ are finitely generated $\R$-modules. It follows that $E$ is a finitely generated $\R$-module. Since $\lambda(E_n)<\infty$ for all $n \geq 1$, there exists $s\in \N$ such that $\m^sE=0.$ Set $B_i=\m^iE/\m^{i+1}E$. It is clear that $B_i$ is a finitely generated graded $\R/\m \R$-module. Therefore, $\lambda((B_i)_n)$ coincides with the polynomial of degree at most $\ell(I)-1$ for $n\gg0.$ Now, we have the following filtration of submodules of $E$ $$E\supseteq \m E\supseteq \m^2 E \supseteq \ldots \supseteq \m^{s-1} E\supseteq 0 \ldots.$$ It follows that $\lambda(E_n)=\sum_{i=0}^{s-1}\lambda(\m^iE_n/\m^{i+1}E_n)$. Therefore, the function $n\mapsto \lambda(E_n)$ coincides with a polynomial of degree at most $ \ell(I)-1$ for all $n\gg0.$
    \end{proof}

To establish the main theorem, we require the following result, whose proof is similar to that of \cite[Proposition 5.6]{tonytrans}.

\begin{lemma}
\label{Thm:ASS}
    Let  $I$ be an ideal of $A$, $\F=\{I_n\}_{n\geq 0}$ be an $I$-stable filtration of $A$. Set $\R=A[It]$. Then $$\operatorname{Ass}_{\mathcal{R}}({G}_{\F}(A))=\operatorname{Ass}_{\mathcal{R}}({L}^{\F}(A)).$$
    \end{lemma}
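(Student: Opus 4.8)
The plan is to encode both $G_{\F}(A)$ and $L^{\F}(A)$ in a single graded $\R$-linear map and then extract the associated primes from it. First I would introduce the graded endomorphism $\psi\colon L^{\F}(A)\to L^{\F}(A)$ of degree $-1$ whose component in degree $n$ is the natural surjection $A/I_{n+1}\twoheadrightarrow A/I_{n}$ (well defined because $I_{n+1}\subseteq I_{n}$). Since the $\R=A[It]$-module structure on $L^{\F}(A)$ rests on the inclusions $I^{j}I_{n+1}\subseteq I_{n+1+j}$, a routine check shows that $\psi$ commutes with the $\R$-action, so $\psi$ is $\R$-linear. Its degree $n$ kernel is $\ker(A/I_{n+1}\to A/I_{n})=I_{n}/I_{n+1}$, whence $\ker\psi=G_{\F}(A)$ as a graded $\R$-submodule of $L^{\F}(A)$. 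In particular the inclusion $\Ass_{\R}(G_{\F}(A))\subseteq\Ass_{\R}(L^{\F}(A))$ is immediate. Two further features of $\psi$ drive the rest: associated primes of the graded $\R$-module $L^{\F}(A)$ are homogeneous and are realized as annihilators of homogeneous elements, and $\psi$ is locally nilpotent, since a homogeneous element of degree $n$ satisfies $\psi^{n+1}\zeta=0$ (the iterated reductions eventually pass modulo $I_{0}=A$).

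For the reverse inclusion I would fix a homogeneous $\p\in\Ass_{\R}(L^{\F}(A))$ and choose, among all homogeneous elements whose annihilator is exactly $\p$, an element $\zeta$ minimizing the nilpotency index $k=\min\{\,j\geq 1:\psi^{j}\zeta=0\,\}$. Then $\eta=\psi^{k-1}\zeta$ is a nonzero element of $\ker\psi=G_{\F}(A)$ satisfying $\p\subseteq\ann_{\R}(\eta)$, and it remains to prove $\ann_{\R}(\eta)=\p$, which gives $\p\in\Ass_{\R}(G_{\F}(A))$. I would argue by contradiction: if there were a homogeneous $r\in\ann_{\R}(\eta)\setminus\p$, then, as $\p$ is prime and $r\notin\p$, the element $r\zeta$ would again be homogeneous with $\ann_{\R}(r\zeta)=\p$, while $\psi^{k-1}(r\zeta)=r\eta=0$ would make its nilpotency index strictly less than $k$, contradicting the minimality of $\zeta$.

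The main obstacle is precisely this passage from the containment $\p\subseteq\ann_{\R}(\eta)$ to equality: applying powers of $\psi$ enlarges annihilators, so one must control them by selecting $\zeta$ of minimal nilpotency index at the very start, rather than iterating $\psi$ on an arbitrary element with annihilator $\p$. The homogeneity of $\p$ and the realizability of associated primes by homogeneous elements are exactly what make this minimal choice, and the contradiction step, available; this is the analogue for the filtration $\F$ of the argument in \cite[Proposition 5.6]{tonytrans}.
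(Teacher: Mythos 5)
Your proof is correct, but it takes a genuinely different route from the paper's. The paper proves $\Ass_{\R}(L^{\F}(A))\subseteq\Ass_{\R}(G_{\F}(A))$ by exhibiting $L^{\F}(A)$ as the union of the finitely generated graded $\R(\F)$-submodules $L_s$ generated by $\bigoplus_{n=0}^{s}A/I_{n+1}$, computing $(L_s)_n$ explicitly, and then inducting along the short exact sequences $0\to L_{s-1}\to L_s\to G_{\F}(A)(-s)\to 0$ together with $\Ass_\R(L)=\bigcup_s\Ass_\R(L_s)$. You instead work with the degree $-1$ locally nilpotent $\R$-linear surjection $\psi$ whose kernel is $G_{\F}(A)$, and recover an associated prime of the kernel by minimizing the nilpotency index among homogeneous elements realizing a given $\p\in\Ass_\R(L^{\F}(A))$. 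The two are in fact two faces of the same structure: one checks directly that $L_s=\ker(\psi^{s+1})$, so the paper is filtering $L^{\F}(A)$ by the kernels of powers of your $\psi$ without naming the map. What your version buys is an element-level argument that avoids constructing the $L_s$ and verifying their finite generation; what it costs is the need for two standard but nontrivial inputs, namely that associated primes of a graded module over the Noetherian graded ring $\R$ are homogeneous and are annihilators of homogeneous elements (which does hold for the non-finitely generated module $L^{\F}(A)$, since one may pass to the finitely generated graded submodule generated by the homogeneous components of a witness), and the observation that if $\ann_{\R}(\eta)$ properly contains the graded prime $\p$ then it contains a homogeneous element outside $\p$. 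Both steps are sound as you use them, including the degenerate case $k=1$, where the existence of $r\in\ann_{\R}(\eta)\setminus\p$ already contradicts $r\zeta\neq 0$.
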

    \begin{proof}
     Set $G={G}_{\F}(A)$ and $L={L}^{\F}(A)$.
      Since $G \subseteq L$, we can conclude that $\Ass_\R(G) \subseteq \Ass_\R(L)$.
      Let $L_s$ be an ${\R}(\F)$-submodule of $L$ defined as follows:

      $$L_s =\left \langle \bigoplus_{n=0}^{s}\dfrac{A}{{I_{n+1}}}\right \rangle.$$
It can be easily checked that the $n$-th graded component of $L_s$ is

\[
(L_s)_n =
\begin{cases}
\dfrac{A}{I_{n+1}}, & \text{if } n \leq s, \\[8pt]
\dfrac{I_{\,n-s}}{I_{\,n+1}}, & \text{if } n > s.
\end{cases}
\]

      Note that $L_s$ is a finitely generated ${\R}(\F)$-module and so of $\R$-module. It is clear that $L_s\subseteq L_{s+1}$ for all $s\geq 0$, and $L=\bigcup_{s\geq 0}L_s$. Thus, $$\operatorname{Ass}_{\R}(L)=\bigcup_{s\geq 0}\operatorname{Ass}_{\R}(L_s).$$ Also note that $L_0=G$. For all $s\geq 0$, we have an exact sequence $$0\to L_{s-1}\to L_s\to G(-s)\to 0.$$

For the case where $s=1$, we obtain the exact sequence $$0\to G\to L_1\to G(-1)\to 0.$$ This implies that $\operatorname{Ass}_{\R}(L_1)\subseteq \operatorname{Ass}_{\R}(G)$. Continuing this process, we obtain $\operatorname{Ass}_{\R}(L_s)\subseteq \operatorname{Ass}_{\R}(G)$ for all $s\geq 1.$ Therefore, $\Ass_\R (L) \subseteq \Ass_\R (G)$.
Hence $\operatorname{Ass}_{\R}(G)=\operatorname{Ass}_{\R}(L)$.
\end{proof}

 The next result is a key theorem of this paper.

\begin{theorem}\label{key}
     Let $(A,\mathfrak{m})$ be a Noetherian local ring of dimension $d\geq 1$ and $I$ be an ideal of $A$. Let $\F=\{I_n\}_{n\geq 0}$ $\F'=\{J_n\}_{n\geq 0}$ be two $I$-stable filtrations of $A$ such that $J_n\subseteq I_n$ for all $n\geq 1$. Set $E=\R(\F)/\R{(\F')}$. Assume that $\lambda(E_n)<\infty$ for all $n\geq 1$. If $G_{\F'}(A)$ is unmixed and equidimensional, then either $E=0$ or the function $n\mapsto \lambda(E_n)$ coincides with a polynomial of degree $d-1$ for all $n\gg0.$ Moreover, if $\ell(I)<d$ then $E=0.$
\end{theorem}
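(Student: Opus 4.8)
The plan is to reduce the whole statement to locating a single associated prime of $E$ of the right dimension. First observe that the last assertion follows from the main dichotomy together with Lemma \ref{Thm:upperbound}: if $\ell(I)<d$, then $n\mapsto \lambda(E_n)$ agrees with a polynomial of degree at most $\ell(I)-1<d-1$, so it cannot have degree $d-1$, and the dichotomy then forces $E=0$. Hence it suffices to prove that whenever $E\neq 0$ the function $n\mapsto\lambda(E_n)$ is eventually a polynomial of degree exactly $d-1$. Lemma \ref{Thm:upperbound} already supplies the upper bound $\deg\le \ell(I)-1\le d-1$, so the only thing I need is a matching lower bound, which I will extract from $\Ass_{\R}(E)$.

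The key step is to compare $E$ with $L^{\F'}(A)$. Applying the snake lemma to the map of short exact sequences $0\to \R(\F')\to A[t]\to L^{\F'}(A)(-1)\to 0$ and $0\to \R(\F)\to A[t]\to L^{\F}(A)(-1)\to 0$, whose left vertical arrow is the inclusion $\R(\F')\hookrightarrow \R(\F)$, I obtain a short exact sequence of graded $\R$-modules $0\to E\to L^{\F'}(A)(-1)\to L^{\F}(A)(-1)\to 0$, where in degree $n$ the copy of $E_n=I_n/J_n$ is identified with the kernel of $A/J_n\to A/I_n$. In particular $\Ass_{\R}(E)\subseteq \Ass_{\R}(L^{\F'}(A))$, and by Lemma \ref{Thm:ASS} the latter equals $\Ass_{\R}(G_{\F'}(A))$. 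Since $E\neq 0$, I may choose a (necessarily homogeneous) prime $\mathfrak{P}\in \Ass_{\R}(E)\subseteq \Ass_{\R}(G_{\F'}(A))$, and the hypothesis that $G_{\F'}(A)$ is unmixed and equidimensional then yields $\dim \R/\mathfrak{P}=d$.

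Finally I would convert this dimension statement into the degree bound. Choosing a homogeneous $x\in E$ of degree $a$ with $\ann_{\R}(x)=\mathfrak{P}$, the cyclic submodule $\R x\cong (\R/\mathfrak{P})(-a)$ is a graded submodule of $E$, so every graded piece $(\R/\mathfrak{P})_m$ has finite length over $A$. In degree zero this forces the domain $A/(\mathfrak{P}\cap A)$ to have finite length, hence to be a field, so $\mathfrak{P}\cap A=\m$ and $\mathfrak{P}\supseteq \m\R$. Thus $\R/\mathfrak{P}$ is a graded-domain quotient of the fiber cone $\R/\m\R$, a standard graded algebra over the field $A/\m$ of dimension $\ell(I)$; its Hilbert function $\lambda((\R/\mathfrak{P})_m)$ is therefore eventually a polynomial of degree $\dim\R/\mathfrak{P}-1=d-1$ (and incidentally $d=\dim\R/\mathfrak{P}\le \ell(I)\le d$, so $\ell(I)=d$). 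Since $\R x\subseteq E$ gives $\lambda(E_n)\ge \lambda((\R/\mathfrak{P})_{n-a})$, the Hilbert polynomial of $E$ has degree at least $d-1$, which combined with the upper bound of Lemma \ref{Thm:upperbound} pins it to exactly $d-1$.

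I expect the main obstacle to be precisely the passage from ``$E$ has an associated prime $\mathfrak{P}$ with $\dim\R/\mathfrak{P}=d$'' to a genuine lower bound on the growth of $\lambda(E_n)$: a priori the graded components of $E$ are only of finite length as $A$-modules while $\R_0=A$ is not Artinian, so the classical identity ``Hilbert degree $=\dim-1$'' is not available over $\R$ directly. The device that clears this hurdle is the finite-length-in-degree-zero argument, which contracts $\mathfrak{P}$ to $\m$ and thereby reduces the computation to the genuinely standard graded fiber cone, where the classical Hilbert-polynomial theory does apply.
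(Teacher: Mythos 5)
Your proposal is correct and follows essentially the same route as the paper's proof: embed $E$ into $L^{\F'}(A)(-1)$, use Lemma \ref{Thm:ASS} to get $\Ass_{\R}(E)\subseteq \Ass_{\R}(G_{\F'}(A))$, invoke unmixedness and equidimensionality to force $\dim_{\R}E=d$, and derive the last assertion by contradiction with Lemma \ref{Thm:upperbound}. The only difference is that you carefully justify the passage from $\dim_{\R}E=d$ to the Hilbert polynomial having degree $d-1$ (via $\mathfrak{P}\supseteq\m\R$ and the fiber cone), a step the paper asserts without comment; this is a worthwhile addition but not a different argument.
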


\begin{proof}
     Note that $E$ is a finitely generated $\R$-module. If $E=0$, then we have nothing to show. Suppose $E\neq 0$. By the Lemma \ref{Thm:ASS}, we get that $\operatorname{Ass}_{\mathcal{R}}({L}^{\F'}(A))=\operatorname{Ass}_{\mathcal{R}}({G}_{\F'}(A)).$ The short exact sequence $0 \to E \to {L}^{\F'}(A)(-1)$ yields $$\operatorname{Ass}_{\mathcal{R}}(E)\subseteq \operatorname{Ass}_{\mathcal{R}}({L}^{\F'}(A))=\operatorname{Ass}_{\mathcal{R}}({G}_{\F'}(A))$$ and $\operatorname{Min}_{\mathcal{R}}(E) \subseteq \operatorname{Ass}_{\mathcal{R}}(E) \subseteq \operatorname{Ass}_{\mathcal{R}}({G}_{\F'}(A)).$
    Since ${G}_{\F'}(A)$ is unmixed and equidimensional,  $\operatorname{dim}_{\mathcal{R}}(E) = \operatorname{dim}_{\mathcal{R}}({G}_{\F'}({A}))=d.$ This proves that $\lambda(E_n)$ coincides with a polynomial of degree $d-1$ for all $n\gg0.$

    Let $\ell(I)<d$. Assume that $E\neq 0$. Then $\operatorname{Ass}_{\mathcal{R}}(E)\neq \phi.$ This implies that the dimension of $E$ is $d$, which is a contradiction to Lemma \ref{Thm:upperbound}.
\end{proof}

\begin{remark}
   If $\dim A = 0$, then $\dim_{\R} E = 0$ in the Theorem \ref{key}; in particular, $I_n = J_n = 0$ for all sufficiently large $n$. Hence, it is more interesting to restrict our attention to the case $\dim(A) > 0$.
\end{remark}

We now establish an analogue of Theorem \ref{key} for the well-known $I$-stable filtrations.

\begin{theorem}
\label{Thm:main}
  Let $(A,\mathfrak{m})$ be a Noetherian local ring of dimension $d\geq 1$ and $I$ be a regular ideal of $A$. Let $M$ be an $A$-module with $M_{\p}$ is free for all $\p\neq \m.$ Set $E = {\R({\F^I_M})}/{\wt{\R}}=\bigoplus_{n\geq 1}{r(I^n,M)}/{\wt{I^n}}.$
  \begin{itemize}
      \item [(a)] Let $\wt{I^n}=I^n$ for all $n\geq l$. Set  $E_{\geq l}=\bigoplus_{n\geq l}r(I^n,M)/I^n $. If ${G}_I(A)$ is unmixed and equidimensional, then either $E_{\geq l}=0$ or the function $n\mapsto \lambda(E_n)$ coincides with a polynomial of degree $d-1$ for all $n\gg0.$
      \item[(b)] If $E_{\geq l}=0$ then $E=0.$
      \item[(c)] If $\ell(I)<d$ then $E=0.$
  \end{itemize}
  \end{theorem}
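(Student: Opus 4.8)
The plan is to deduce all three parts at once from Theorem \ref{key}, applied to the pair of filtrations $\F = \F^I_M = \{r(I^n,M)\}_{n\ge 0}$ and $\wt\F = \{\wt{I^n}\}_{n\ge 0}$, so that the quotient produced by Theorem \ref{key} is exactly $E=\R(\F^I_M)/\wt\R=\bigoplus_{n\ge 1}r(I^n,M)/\wt{I^n}$. Both are $I$-stable filtrations of ideals of $A$: $\F^I_M$ by Theorem \ref{Thm:Istable} (since $I$ is regular and $\ann(M)=0$), and $\wt\F$ by Remark \ref{rmk:tildeI^nM}(1), because $\grade(I,A)>0$ as $I$ is regular; the same remark gives $\wt{I^n}=I^n$ for all $n\gg 0$, which is the standing hypothesis of part (a) with an explicit $l$. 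Moreover $\wt{I^n}\subseteq r(I^n,M)$, the module $E$ is finitely generated over $\R$, and $\lambda(E_n)<\infty$ for every $n\ge 1$ by \cite[Theorem 6.7]{Zulfeqarr}. Thus the only hypothesis of Theorem \ref{key} left to check is that $G_{\wt\F}(A)$ is unmixed and equidimensional, and this is the crux of the argument.

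To verify it, I would compare $G_{\wt\F}(A)=\bigoplus_n \wt{I^n}/\wt{I^{n+1}}$ with $G_I(A)=\bigoplus_n I^n/I^{n+1}$. Since $\wt{I^n}=I^n$ for all $n\ge l$, these two finitely generated graded $\R$-modules have identical truncations in all degrees $\ge l$, together with their $\R$-action. An elementary homogeneous-element argument shows that for a finitely generated graded $\R$-module $N$, any $\p\in\Ass_\R(N)$ with $\p\not\supseteq\R_+$ is the annihilator of a homogeneous element of arbitrarily large degree, so that $\Ass_\R(N)\setminus V(\R_+)=\Ass_\R(N_{\ge l})\setminus V(\R_+)$. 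Applying this to both modules gives
\[
\Ass_\R\big(G_{\wt\F}(A)\big)\setminus V(\R_+)=\Ass_\R\big(G_I(A)\big)\setminus V(\R_+),
\]
and every prime on the right satisfies $\dim\R/\p=d$ because $G_I(A)$ is unmixed and equidimensional.

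The second ingredient is the positive depth of the Ratliff--Rush filtration: since $\grade(I,A)>0$, a superficial element $x$ satisfies $(\wt{I^{n+1}}:_A x)=\wt{I^n}$ for all $n\ge 0$, so the initial form of $x$ is a nonzerodivisor on $G_{\wt\F}(A)$; hence $\grade\big(G_{\wt\F}(A)_+,G_{\wt\F}(A)\big)>0$ and no associated prime of $G_{\wt\F}(A)$ contains $\R_+$ (one passes to $A(X)$ to secure an infinite residue field, which preserves lengths, associated primes and all hypotheses). Combining the two ingredients, $\Ass_\R(G_{\wt\F}(A))=\Ass_\R(G_{\wt\F}(A))\setminus V(\R_+)=\Ass_\R(G_I(A))\setminus V(\R_+)$, so every associated prime of $G_{\wt\F}(A)$ has dimension $d$; as $\dim G_{\wt\F}(A)=\dim G_I(A)=d$, the ring $G_{\wt\F}(A)$ is unmixed and equidimensional. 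I expect this step to be the main obstacle, both because it rests on the positive-depth property of the Ratliff--Rush filtration and because the comparison must be carried out on the finitely generated modules $G_I(A)$ and $G_{\wt\F}(A)$ rather than on the $L^{\F}(A)$ of Lemma \ref{Thm:ASS}, which are not finitely generated over $\R$.

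With $G_{\wt\F}(A)$ unmixed and equidimensional, Theorem \ref{key} yields the dichotomy: either $E=0$, or $n\mapsto\lambda(E_n)$ agrees with a polynomial of degree $d-1$ for $n\gg 0$, and moreover $E=0$ whenever $\ell(I)<d$. For part (a), observe that $E_n=r(I^n,M)/I^n=(E_{\ge l})_n$ for $n\ge l$: if $E=0$ then $E_{\ge l}=0$, while if $E\ne 0$ then $\lambda(E_n)$ is a genuine polynomial of degree $d-1$, giving the two stated alternatives. Part (c) is immediate from the final clause of Theorem \ref{key}. For part (b), if $E_{\ge l}=0$ then $E_n=0$ for all $n\ge l$, so $\lambda(E_n)$ vanishes for $n\gg 0$ and cannot be a polynomial of degree $d-1$ (as $d\ge 1$); by the dichotomy this forces $E=0$.
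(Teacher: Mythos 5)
Your argument is correct, but it takes a genuinely different route from the paper's, and the difference is worth spelling out. The paper never needs to know that $G_{\wt{\F}}(A)$ is unmixed and equidimensional, where $\wt{\F}=\{\wt{I^n}\}_{n\geq 0}$: for part (a) it works with the truncation $E_{\geq l}=\bigoplus_{n\geq l}r(I^n,M)/I^n$, which embeds degreewise into $L^{I}(A)(-1)$ for the $I$-adic filtration itself, so Lemma \ref{Thm:ASS} applied to $\{I^n\}_{n\geq 0}$ already gives $\Ass_\R(E_{\geq l})\subseteq\Ass_\R(G_I(A))$ and hence $\dim_\R E_{\geq l}=d$. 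You instead apply Theorem \ref{key} to the full module $E$ with $\F'=\wt{\F}$, which forces you to transfer unmixedness from $G_I(A)$ to $G_{\wt{\F}}(A)$. That transfer is the real content of your write-up, and it is sound: the truncation argument via annihilators of homogeneous elements of arbitrarily large degree correctly identifies $\Ass_\R$ away from $V(\R_+)$, and the superficial-element identity $(\wt{I^{n+1}}:x)=\wt{I^n}$ (cited in the paper from \cite[Proposition 8.2]{Zulfeqarr}) shows that $xt\in\R_1$ is a nonzerodivisor on $G_{\wt{\F}}(A)$, so no associated prime contains $\R_+$. What your route buys is a statement of independent interest (unmixedness and equidimensionality pass from $G_I(A)$ to $G_{\wt{\F}}(A)$); what it costs is more work than part (a) actually requires, plus the caveat below.

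The one substantive caveat concerns part (b). As stated, (b) carries no unmixedness hypothesis, and the paper proves it unconditionally by the colon argument: choosing $x\in I$ superficial, for $i<l$ one has $x^{l-i}r(I^i,M)\subseteq r(I^l,M)=\wt{I^l}$, whence $r(I^i,M)\subseteq(\wt{I^l}:x^{l-i})=\wt{I^i}$ and $E_i=0$. Your derivation of (b) from the dichotomy of Theorem \ref{key} uses the unmixedness of $G_{\wt{\F}}(A)$, hence of $G_I(A)$; so you have proved (b) only under the hypothesis of part (a). If (b) is to be an unconditional statement, you should replace that deduction by the direct superficial-element argument. Everything else, including the passage to an infinite residue field and the deduction of (a) and (c) from the dichotomy, is fine.
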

  \begin{proof}
(a)  Note that $E$ is a finitely generated $\mathcal{R}$-module (see Theorem \ref{Thm:Istable}), and hence so is $E_{\geq l}$. By the proof of \cite[Theorem 6.7]{Zulfeqarr}, we have $\lambda(E_n) < \infty$ for all $n \geq 1$. If $E_{\geq l} = 0$, then there is nothing to prove. Suppose instead that $E_{\geq l} \neq 0$. By the same argument as in the proof of Theorem~\ref{key}, we obtain
$$\dim_{\mathcal{R}}(E_{\geq l}) = \dim_{\mathcal{R}}(G_I(A)) = d.$$
It follows that $\lambda(E_n)$ coincides with a polynomial of degree $d-1$ for all $n \gg 0$.

     (b) By \cite[1.4, 2.4]{Part1} and \cite[Proposition 1.4(d)]{Zulfeqarr}, we may assume that the residue field of $A$ is infinite. Let $x\in I$ be an $A$-superficial element.
Since $M$ is a finitely generated $A$-module, we have the following surjective map $$A^{\mu(M)}\to M.$$ By \cite[Proposition 1.4(a),(b)]{Zulfeqarr}, $\wt{I^n}=r(I^n,A)=r(I^n,A^{\mu(M)})\subseteq r(I^n,M)$ for all $n\geq 1$. By Proposition 8.2 of \cite{Zulfeqarr}, we also have $\wt{I^n}:x=\wt{I^{n-1}}$ for all $n\geq 1$.

For $i<l$, we have $$x^{l-i}r(I^i,M)\subseteq r(I^l,M)=\wt{I^l}.$$ This implies that $\wt{I^i} \subseteq r(I^i,M)\subseteq (\wt{I^l} : x^{l-i}) = \wt{I^i}$. Therefore, $E_i=0$. Hence $E=0.$

(c) By part(b), it suffices to show that $E_{\geq l}=0$. Suppose  $E_{\geq l}\neq 0$. By part(a), the function $n\mapsto \lambda(E_n)$ coincides with a polynomial of degree $d-1$ for all $n\gg0.$ But this is a contradiction to Lemma \ref{Thm:upperbound}.
  \end{proof}

 We shall use the following result in the proof of Theorem \ref{Thm:integralclosure}.

\begin{lemma}
\label{Thm:closure}
    Let $(A,\m)$ be an analytically unramified local ring and $I$ be a regular ideal. Then there exists an $A$-module $M$ of rank one such that $r(I^n,M)=\overline{I^n}$ for all $n\geq 1.$
\end{lemma}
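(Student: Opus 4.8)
The plan is to realize the integral closure filtration $\{\overline{I^n}\}_{n\ge 0}$ as $\mathcal{F}^I_M$ for the single ideal $M=\overline{I^c}$ with $c$ large. Since $A$ is analytically unramified it is reduced, and by Rees's theorem (\cite[Theorem 1.4]{Drees1961}) the filtration $\{\overline{I^n}\}$ is $I$-stable; so I would fix $n_0$ with $\overline{I^{n+1}}=I\,\overline{I^n}$ for all $n\ge n_0$, which yields the basic identity $I^t\,\overline{I^c}=\overline{I^{c+t}}$ for every $c\ge n_0$ and $t\ge 0$. I then set $M=\overline{I^c}$ with $c\ge n_0$. As $I$ is regular, $M\supseteq I^c$ contains a nonzerodivisor, so $\operatorname{ann}(M)=0$; and localizing at a minimal prime $\p$ of the reduced ring $A$ gives $M_\p=A_\p$ (because $I\not\subseteq\p$), so $M$ has rank one, as required by the statement.

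With this $M$ the next step is to reduce the computation of $r(I^n,M)$ to a colon of integrally closed ideals. For each $j\ge 0$ the stability identity gives $I^{jn}M=\overline{I^{jn+c}}$, so the general term of the defining chain of $r(I^n,M)$ is $(I^{(j+1)n}M : I^{jn}M)=(\overline{I^{(j+1)n+c}} : \overline{I^{jn+c}})$. Writing $m=jn+c\ (\ge n_0)$, the entire chain is governed by the ideals $(\overline{I^{m+n}} :_A \overline{I^m})$ with $m\ge n_0$, and I would prove
$$(\overline{I^{m+n}} :_A \overline{I^m})=\overline{I^n}\qquad\text{for all } m\ge n_0,\ n\ge 1.$$
The inclusion $\supseteq$ is immediate from $\overline{I^n}\cdot\overline{I^m}\subseteq\overline{I^{m+n}}$; together with the reverse inclusion this makes every term of the chain equal to $\overline{I^n}$, whence $r(I^n,M)=\overline{I^n}$.

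The reverse inclusion is the crux, and is where the hypothesis enters decisively through the module-finiteness of the integral closure Rees algebra $S=\overline{\R}(I)=\bigoplus_{k\ge 0}\overline{I^k}t^k\subseteq A[t]$. Suppose $a\in A$ satisfies $a\,\overline{I^m}\subseteq\overline{I^{m+n}}$ for some $m\ge n_0$; using the stability identity one checks $a\,\overline{I^k}\subseteq\overline{I^{k+n}}$ for every $k\ge m$, so the homogeneous element $\xi=at^n\in A[t]$ satisfies $\xi\,S_{\ge m}\subseteq S_{\ge m+n}\subseteq S_{\ge m}$. Now $S$ is Noetherian (being module-finite over $\R$), $S_{\ge m}$ is a finitely generated ideal of $S$ containing a nonzerodivisor of $A[t]$ (hence faithful over $S$), so the determinant trick shows $\xi$ is integral over $S$; since $S$ is integrally closed in $A[t]$ this forces $\xi=at^n\in S$, i.e. $a\in S_n=\overline{I^n}$. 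I expect this last step to be the main obstacle: bounding the colon $(\overline{I^{m+n}}:\overline{I^m})$ from above is exactly what requires the finiteness and integral closedness of $\overline{\R}(I)$, which is the content supplied by the analytically unramified hypothesis.
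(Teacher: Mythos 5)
Your proof is correct, but it takes a genuinely different route from the paper's. The paper invokes \cite[Theorem 4.4]{Zulfeqarr} as a black box to get, for each $n$, a rank-one module $T_n$ with $r(I^n,T_n)=\overline{I^n}$, then sets $M=T_1\otimes\cdots\otimes T_{n_0}$ (with $n_0$ the stability index of the integral closure filtration) and uses the formal properties of $r(-,-)$ under tensor products and sums from \cite{Zulfeqarr} to force $r(I^n,M)=\overline{I^n}$ for $n\le n_0$, extending to all $n$ by stability. You instead exhibit the concrete module $M=\overline{I^c}$, $c\ge n_0$, and reduce everything to the colon identity $(\overline{I^{m+n}}:_A\overline{I^m})=\overline{I^n}$ for $m\ge n_0$, which you prove by the determinant trick inside $S=\overline{\R}(I)$: the element $at^n$ stabilizes the faithful finitely generated ideal $S_{\ge m}$, hence is integral over $S$, hence lies in $S_n=\overline{I^n}t^n$ because $S$ is integrally closed in $A[t]$ and graded. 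All the supporting steps check out: $A$ reduced and $I$ regular give $\operatorname{rank}M=1$; $I^t\overline{I^c}=\overline{I^{c+t}}$ follows from Rees's theorem; $S_{\ge m}$ contains the nonzerodivisor $x^mt^m$ so faithfulness holds. What your approach buys is self-containment (no reliance on the machinery of \cite{Zulfeqarr} beyond the definition of $r$), an explicit and natural choice of $M$, and a colon formula of independent interest; as a bonus, when $I$ is $\m$-primary your $M=\overline{I^c}$ is visibly free on the punctured spectrum, which is exactly the extra property Remark \ref{Thm:free}(2) extracts from the paper's construction. What the paper's route buys is brevity, given that the results of \cite{Zulfeqarr} are already in place.
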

\begin{proof}
    By \cite[Theorem 4.4]{Zulfeqarr}, there exists a finitely generated module (say $T_n$) of rank one such that $r(I^n, T_n)=\overline{I^n}$ for each $n\geq 1.$ Since $A$ is analytically unramified, integral closure filtration is $I$-stable. So there exists $n_0$ such that $I\overline{I^n}=\overline{I^{n+1}}$ for all $n\geq n_0.$ Set $M=T_1\otimes T_2\otimes \ldots \otimes T_{n_{0}}.$ It is clear that $M$ is a finitely generated $A$-module of rank one. Therefore, by \cite[Proposition 4.2]{Zulfeqarr},  we get that $r(I^n,M)\subseteq \overline{I^n}$ for all $n\geq 1.$ This asserts that for all $1\leq n\leq n_0$,

    $$\overline{I^n}\supseteq r(I^n,M)\supseteq \sum_{i=1}^{n_0}r(I^n,T_i)\supseteq \overline{I^n},$$
    the second containment is by \cite[Proposition 1.4(c)]{Zulfeqarr}. Thus, $r(I^n,M)=\overline{I^n}$ for all $1\leq n\leq n_0.$ We also have  $\overline{I^{n_0+1}}=I\overline{I^{n_0}}=Ir(I^{n_0},M)\subseteq r(I^{n_0+1},M)\subseteq \overline{I^{n_0+1}}.$ This proves our result.
\end{proof}

\begin{remark}
\label{Thm:free}

\begin{enumerate}
\item  Let $(A,\m)$ be a local ring and let $J$ be an $\m$-primary ideal. Then $J$ is free on the punctured spectrum of $A$.
    \item If $I$ is an $\m$-primary ideal in the above lemma, then $M$ in the above lemma is the tensor product of $\m$-primary ideals of $A$, see \cite[Theorem 4.4]{Zulfeqarr}. In particular, $M_{\p}$ is free for all $\p \neq \m$.

\end{enumerate}

\end{remark}
\begin{theorem}
\label{Thm:integralclosure}
    Let $(A,\m)$ be an analytically unramified local ring of positive depth and $I$ an $\m$-primary ideal of $A.$ If $G_I(A)$ is unmixed and equidimensional then the function $n\mapsto \lambda({\overline{I^n}/{I^n}})$ coincides with a polynomial of degree $d-1$ for all $n\gg0$ or $\overline{I^n}={I^n}$ for all $n\geq 1$.
\end{theorem}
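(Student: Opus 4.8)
The plan is to deduce Theorem~\ref{Thm:integralclosure} from the already-established Theorem~\ref{Thm:main} by choosing a module $M$ that encodes the integral closure filtration. The key observation is that the integral closure filtration $\{\overline{I^n}\}_{n\geq 0}$ is itself one of the $I$-stable filtrations covered by Theorem~\ref{Thm:main}: by Lemma~\ref{Thm:closure}, since $A$ is analytically unramified and $I$ is regular (being $\m$-primary in a ring of positive depth), there exists a finitely generated $A$-module $M$ of rank one with $r(I^n,M)=\overline{I^n}$ for all $n\geq 1$. Moreover, by Remark~\ref{Thm:free}(2), this $M$ is a tensor product of $\m$-primary ideals, so $M_\p$ is free for all $\p\neq\m$, which is precisely the standing hypothesis under which Theorem~\ref{Thm:main} was proved.

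First I would check that all the hypotheses of Theorem~\ref{Thm:main} are met. Since $A$ has positive depth and $I$ is $\m$-primary, $I$ contains a nonzerodivisor, so $I$ is regular; the dimension is $d\geq 1$; and $G_I(A)$ is assumed unmixed and equidimensional. The module $M$ from Lemma~\ref{Thm:closure} satisfies $M_\p$ free for $\p\neq\m$. With this $M$, the quotient module in Theorem~\ref{Thm:main} becomes
$$E=\R(\F^I_M)/\wt{\R}=\bigoplus_{n\geq 1}r(I^n,M)/\wt{I^n}=\bigoplus_{n\geq 1}\overline{I^n}/\wt{I^n}.$$
Since $\grade(I,A)>0$, Remark~\ref{rmk:tildeI^nM}(1) gives $\wt{I^n}=I^n$ for all $n\gg0$, so there is an integer $l$ with $\wt{I^n}=I^n$ for $n\geq l$, and then $E_{\geq l}=\bigoplus_{n\geq l}\overline{I^n}/I^n$.

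Next I would run the dichotomy in Theorem~\ref{Thm:main}. Part~(a) says that either $E_{\geq l}=0$ or the function $n\mapsto\lambda(E_n)$ is eventually a polynomial of degree exactly $d-1$. I would then translate each alternative into the language of $\overline{I^n}/I^n$. In the non-vanishing case, for $n\geq l$ we have $E_n=\overline{I^n}/I^n$, so the Hilbert function $n\mapsto\lambda(\overline{I^n}/I^n)$ agrees with a polynomial of degree $d-1$ for all $n\gg0$, which is the first conclusion. In the vanishing case $E_{\geq l}=0$, part~(b) of Theorem~\ref{Thm:main} upgrades this to $E=0$, i.e.\ $\overline{I^n}=\wt{I^n}$ for all $n\geq 1$. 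To finish I would argue that $\wt{I^n}=I^n$ for all $n\geq 1$ in this situation: from $E=0$ we get $\overline{I^n}=\wt{I^n}\subseteq I^n\subseteq\overline{I^n}$ using the standard inclusions $I^n\subseteq\wt{I^n}\subseteq\overline{I^n}$, forcing $\overline{I^n}=I^n$ for every $n\geq 1$, which is the second conclusion.

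The main obstacle is essentially bookkeeping rather than any deep difficulty: I must be careful that the two alternatives of Theorem~\ref{Thm:main} really correspond to the two stated alternatives here, and in particular that the sandwich $I^n\subseteq\wt{I^n}\subseteq\overline{I^n}$ closes up correctly when $E=0$. The one genuinely substantive input is Lemma~\ref{Thm:closure} (the existence of a rank-one $M$ realizing the integral closure filtration as a ratliff filtration $r(I^n,M)$), but that has been established, so the deduction reduces to verifying hypotheses and unwinding definitions. I expect no new estimates are needed; the polynomiality and the exact degree $d-1$ are inherited verbatim from Theorem~\ref{Thm:main}(a), whose proof in turn rests on the associated-primes computation of Lemma~\ref{Thm:ASS} together with the equidimensionality hypothesis on $G_I(A)$.
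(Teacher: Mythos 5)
Your overall strategy is the same as the paper's: realize the integral closure filtration as $r(I^n,M)$ for the rank-one module $M$ of Lemma~\ref{Thm:closure}, note via Remark~\ref{Thm:free}(2) that $M_\p$ is free off $\m$, and feed the resulting $E=\bigoplus_{n\geq1}\overline{I^n}/\wt{I^n}$ into Theorem~\ref{Thm:main}. The non-vanishing branch of your argument is fine. But the vanishing branch has a genuine gap at the very last step. From $E=0$ you correctly obtain $\overline{I^n}=\wt{I^n}$ for all $n\geq1$, and you then write the chain $\overline{I^n}=\wt{I^n}\subseteq I^n\subseteq\overline{I^n}$, citing the ``standard inclusions'' $I^n\subseteq\wt{I^n}\subseteq\overline{I^n}$. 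The inclusion $\wt{I^n}\subseteq I^n$ that your chain needs is not one of those standard inclusions --- it is their reverse, and it is false in general: the Ratliff--Rush closure $\wt{I^n}$ can strictly contain $I^n$ for small $n$ even when $\grade(I,A)>0$ (e.g.\ $I=(x^4,x^3y,xy^3,y^4)$ in $k[[x,y]]$ has $\wt{I}\neq I$). So $E=0$ by itself only yields $I^n\subseteq\wt{I^n}=\overline{I^n}$, which does not force $\overline{I^n}=I^n$.

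The missing ingredient is exactly where the paper uses the unmixed and equidimensional hypothesis a second time, outside of Theorem~\ref{Thm:main}: since $G_I(A)$ is unmixed and equidimensional of positive dimension, its associated primes are all minimal of full dimension, so the graded maximal ideal is not associated and $G_I(A)$ has positive depth; because $I$ is $\m$-primary, the degree-zero part of the graded maximal ideal is nilpotent, whence $\grade(G_I(A)_+,G_I(A))>0$. By Remark~\ref{rmk:tildeI^nM}(2) this gives $\wt{I^n}=I^n$ for \emph{all} $n\geq1$, not merely for $n\gg0$ as your appeal to Remark~\ref{rmk:tildeI^nM}(1) provides. With that in hand, $E=\bigoplus_{n\geq1}\overline{I^n}/I^n$ on the nose and both alternatives of Theorem~\ref{Thm:main} translate directly into the two alternatives of the statement. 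Your proof becomes correct once you insert this depth argument (or any other proof that $\wt{I^n}=I^n$ for all $n\geq1$) in place of the reversed inclusion.
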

\begin{proof}
    As $(A,\m)$ is an analytically unramified ring of positive depth, by Lemma \ref{Thm:closure} and Remark \ref{Thm:free}(2), $r(I^n,M)=\overline{I^n}$ for all $n\geq 1$, where $M$ is the tensor product of $\m$-primary ideals of $A$. Note that the dimension of $G_I(A)$ is positive. Since  $G_I(A)$ is unmixed and equidimensional, the associated primes of $G_I(A)$ are minimal with height zero. This implies that the depth of $G_I(A)$ is positive with respect to the graded maximal ideal $\mathfrak{M}=\m/I\bigoplus_{n\geq 1}I^n/I^{n+1}$. Since $I$ is $\m$-primary, the degree zero component of $\mathfrak{M}$ contains only nilpotent elements. Therefore, $\text{grade}(G_I(A)_{+}, G_I(A))>0.$

    By Remark \ref{rmk:tildeI^nM}, we get $\wt{I^n}=I^n$ for all $n\geq 1.$ Set $E=\bigoplus_{n\geq 1}r(I^n,M)/\wt{I^n}=\bigoplus_{n\geq 1}\overline{I^n}/{I^n}$. Since $I$ is $\m$-primary, $\lambda(E_n)<\infty$ for all $n\geq 1$. By Theorem \ref{Thm:main}, either $\overline{I^n}={I^n}$ for all $n\geq 1$ or the function $n\mapsto \overline{I^n}/I^n$ coincides with a polynomial of degree $d-1$ for all $n\gg0$.
\end{proof}

\begin{remark}
Let $(A,\m)$ be a local ring of dimension $d\geq 1$ and $I$ an $\m$-primary ideal of $A$. If $G_I(A)$ is unmixed and equidimensional then grade$(I,A)>0.$
\end{remark}

\begin{definition}
\normalfont
     Let $(A,\m)$ be a local ring of characteristic $p>0$, $I$ an ideal of $A.$ The tight closure ideal of $I$, denoted by ${I}^*$, is  the set of all elements $x\in A$ for which there exists $c\in A^o$ with $cx^q\in I^{[q]}$ for $q\gg0$. Here $A^o$ is subset of $A$ which contains all the elements of $A$ outside all the minimal prime ideals of $A$, $q=p^e$ for some $e\in \N$ and    $I^{[q]}= \langle x^q  \mid x\in I\rangle$.
\end{definition}
Note that if $A$ is an analytically unramified local ring then the filtration $\{({I^n})^*\}_{n\geq 0}$ is an
 $I$-stable filtration as for all $n\geq 1$, $I^n\subseteq ({I^n})^* \subseteq \overline{I^n}$, see \cite[Proposition 10.2.5]{Bruns}.
\begin{theorem}
\label{Thm:tightclosure}
 Let $(A,\m)$ be an analytically unramified local ring of characteristic $p>0$ and $I$ an $\m$-primary ideal of $A.$ Let $\operatorname{dim}(A)=d\geq 1$.  If ${G}_{I}(A)$ is unmixed and equidimensional then either $({I^n})^*=I^n$ for all $n\geq 1$ or the function $n\mapsto\lambda(({I^n})^*/I^n)$ coincides with a polynomial of degree $d-1$ for all $n\gg0.$
\end{theorem}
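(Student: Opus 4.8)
The plan is to obtain this as a direct instance of Theorem \ref{key}, with the tight closure filtration playing the role of the larger filtration and the $I$-adic filtration the role of the smaller one. The structural fact that makes this possible is recorded in the remark preceding the statement: because $A$ is analytically unramified, Rees's theorem makes $\{\overline{I^n}\}_{n\geq 0}$ an $I$-stable filtration, and since $I^n \subseteq (I^n)^* \subseteq \overline{I^n}$ for every $n\geq 1$, the tight closure filtration $\{(I^n)^*\}_{n\geq 0}$ is squeezed between two $I$-stable filtrations and is therefore itself $I$-stable. This squeeze is the only point that requires care; everything downstream is formal.

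Granting that, I would set $\F = \{(I^n)^*\}_{n\geq 0}$ and $\F' = \{I^n\}_{n\geq 0}$. Both are $I$-stable filtrations of $A$, and $J_n = I^n \subseteq (I^n)^* = I_n$ for all $n\geq 1$ by the very definition of tight closure, so the containment hypothesis of Theorem \ref{key} holds. The quotient module is $E = \R(\F)/\R(\F') = \bigoplus_{n\geq 0}(I^n)^*/I^n$, with graded pieces $E_n = (I^n)^*/I^n$. Since $I$ is $\m$-primary, $A/I^n$ has finite length, whence $\lambda(E_n) \leq \lambda(A/I^n) < \infty$ for all $n\geq 1$, verifying the finiteness hypothesis. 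Finally $G_{\F'}(A) = G_I(A)$ is unmixed and equidimensional by assumption, which is precisely the hypothesis Theorem \ref{key} imposes on the smaller filtration.

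With every hypothesis checked, Theorem \ref{key} applies and yields the dichotomy: either $E = 0$, i.e. $(I^n)^* = I^n$ for all $n\geq 1$, or $n \mapsto \lambda(E_n) = \lambda((I^n)^*/I^n)$ agrees with a polynomial of degree $d-1 = \dim A - 1$ for all $n\gg 0$. I would note that the secondary clause of Theorem \ref{key} (forcing $E=0$ when $\ell(I)<d$) is vacuous here, since an $\m$-primary ideal has analytic spread $\ell(I)=d$. I would also emphasize that, in contrast to the integral closure case, no passage through a rank-one module $M$ with $r(I^n,M)=(I^n)^*$ is needed, nor any verification that $\wt{I^n}=I^n$; those devices were required in Theorem \ref{Thm:integralclosure} only to route the filtration through the module-theoretic Theorem \ref{Thm:main}, whereas the tight closure filtration can be fed straight into the more general Theorem \ref{key}. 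Consequently I expect no genuine obstacle beyond confirming the $I$-stability of $\{(I^n)^*\}$, which the cited remark already supplies.
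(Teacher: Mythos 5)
Your proposal is correct and follows essentially the same route as the paper: both verify that the tight closure filtration is $I$-stable (via the squeeze $I^n \subseteq (I^n)^* \subseteq \overline{I^n}$ and Rees's theorem), note that $\m$-primariness gives $\lambda((I^n)^*/I^n)<\infty$, and then apply Theorem \ref{key} directly with $\F=\{(I^n)^*\}$ and $\F'=\{I^n\}$. Your added observations --- that the $\ell(I)<d$ clause is vacuous and that no detour through a rank-one module is needed here --- are accurate but not part of the paper's argument.
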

\begin{proof}
    The tight closure filtration is an $I$-stable filtration since $(A,\m)$ is an analytically unramified ring; that is, the tight closure Rees algebra $\R^*(I)=\bigoplus_{n\geq 0}({I^n})^*t^n$ is a finitely generated module over the Rees algebra $\R.$ Thus, $E=\R^*(I)/\R=\bigoplus_{n\geq 1}({I^n})^*/I^n$ is a finitely generated $\R$-module. Since $I$ is $\m$-primary, $\lambda(({I^n})^*/I^n)<\infty$ for all $n\geq 1.$ The result now follows from Theorem \ref{key}.
\end{proof}

We now give some examples of Theorem \ref{Thm:integralclosure}. We used Macaulay2 \cite{M2} to verify the following examples.
\begin{example}
\normalfont
   Let $(A,\m)=(k[x,y]_{(x,y)}, (x,y)_{(x,y)})$ be a regular local ring and $I=(x^2,y^2)$ be an ideal of $A.$  By \cite[Proposition 12.1.1, 12.1.2]{Monomial}, if $I$ is a monomial ideal then so is $\overline{I}.$ In fact, $\overline{I}=\langle \alpha\in A\, |\, \alpha^l\in I^l \text{ for some $l>0$, $\alpha$ is a monomial}\rangle$. It can be checked that $I^n=(x^{2n},x^{2n-2}y^2,\ldots ,x^2y^{2n-2},y^{2n})$ and clearly the degree of generators of $I^n$ is $2n.$

   Let $x^ay^b\in \overline{I^n},$ i.e. there exists $l>0$ such that $(x^ay^b)^l\in I^{nl}$. It follows that $al+bl\geq 2nl\implies a+b\geq 2n.$ Since $(xy)^2\in I^2$ and $\text{deg}(x^n)=\text{deg}(y^n)=n<2n=\text{ degree of generators of } I^n$, so $x,y\notin \overline{I}.$ Therefore, $\overline{I}=\m^2$ and hence $\m^{2n}\subseteq \overline{I^n}$ for all $n\geq 1.$

    Suppose $x^ay^b\in \overline{I^n}$ with $a+b<2n$. Then there exists $l>0$ such that $al+bl\geq 2nl$, which is a contradiction. Therefore, $\overline{I^n}=m^{2n}$ for all $n\geq 1.$ Set $E=\bigoplus_{n\geq 1}\overline{I^n}/I^n.$ Since $G_I(A)$ is Cohen Macaulay, so by Theorem \ref{Thm:integralclosure}, $\lambda(E_n)$ is coincide with a polynomial of degree $1$ for all $n\gg0$. In fact, $\lambda(E_n)=n$ for all $n\geq 1$. Note that $\m^{2n+1}\subseteq I^n$ and $\m I^n=\m^{2n+1}$ for all $n\geq 1.$ So we get a exact sequence $$0\to I^n/\m I^n\to \m^{2n}/\m^{2n+1}\to \m^{2n}/I^n\to 0.$$ This implies that $\lambda(\m^{2n}/I^n)=\mu(\m^{2n})-\mu(I^n)=2n+1-(n+1)=n$ for all $n\geq 1.$
\end{example}

The next example says we cannot reject the hypothesis that $G_I(A)$ is unmixed from Theorem \ref{Thm:integralclosure}.

\begin{example}\label{exam}
    \normalfont
     Let $(A,\m)=(k[x,y]_{(x,y)}, (x,y)_{(x,y)})$ be a regular local ring and $I=(x^3,x^2y,y^3)$ be an ideal of $A.$ The associated graded ring $G_I(A)$ is not unmixed. Note that $\R/\m \R=\bigoplus_{n\geq 0}I^n/\m I^n$ is a Noetherian standard graded ring. Thus $\lambda(I^n/\m I^n)$ coincides with a polynomial for all $n\gg0$ (say $P_I(n)$) and $\ell(I)=\text{dim}(\R/\m \R)=\text{deg}(P_I(n))+1$. One can easily verify that $I^n$ contains all monomials of degree $3n$ except $xy^{3n-1}.$ Therefore, $\ell(I)=2.$

      Since $(xy^2)^2\in I^2$, $\m^3\subseteq \overline{I}$. By \cite[Corollary 12.1.6]{Monomial}, $\overline{I^n}$ is generated by monomials of degree $3n$ for all $n\geq 1$. Therefore, $\overline{I^n}=\m^{3n}$ for all $n\geq 1$. Note that $\m^{3n+1}\subseteq I^n \subseteq \m^{3n}$ and $\m I^n=\m^{3n+1}$ for all $n\geq 1.$ So we have the following exact sequence $$0\to I^n/\m I^n\to \m^{3n}/\m^{3n+1}\to \m^{3n}/I^n\to 0.$$ This implies that $\lambda(\m^{3n}/I^n)=\mu(\m^{3n})-\mu(I^n).$ Hence $\lambda(\overline{I^n}/I^n)=1$ for all $n\geq 1.$ This says that the function $n\mapsto \lambda(\overline{I^n}/I^n)$ neither zero nor coincides with a polynomial of degree $d-1=1.$
\end{example}
The unmixedness of $G_I(A)$ in the given examples was
verified with Macaulay2, by first computing $G_I(A)$ via  \sffamily{associatedGradedRing} and then it's associated primes via  \sffamily{associatedPrimes}.

\section{Non \texorpdfstring{$\m$-}-primary cases}
This section discusses that the Theorem \ref{Thm:integralclosure} is true even for non $\m$-primary ideals under some mild conditions.

We shall use the following result to prove the main theorem of this section.

\begin{lemma}\label{Length}
    Let $(A,\m)$ be an analytically unramified local ring of dimension $d\geq 2$ and $I$ be a radical ideal of $A$ of height $d-1$. Let $\F=\{I_n\}_{n\geq 0}$ be an $I$-stable filtration. If $A$ is an isolated singularity, then $\lambda(I_n/I^n)<\infty$ for all $n\geq 1$.
\end{lemma}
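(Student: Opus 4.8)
The plan is to show that the finitely generated $A$-module $I_n/I^n$ (a quotient of ideals in the Noetherian ring $A$) is supported only at $\m$; any finitely generated module with $\operatorname{Supp} \subseteq \{\m\}$ has finite length, so this suffices. Hence it is enough to prove $(I_n)_\p = (I^n)_\p$ for every prime $\p \neq \m$. Throughout I would use the containment $I^n \subseteq I_n \subseteq \overline{I^n}$, valid because $\F$ is $I$-stable: then $\R(\F)$ is module-finite, hence integral, over $\R$, so it lies inside the integral closure $\overline{\R}(I) = \bigoplus_n \overline{I^n}t^n$.

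I would split into two cases. For a prime $\p \neq \m$ with $I \not\subseteq \p$, one has $I_\p = A_\p$, so $(I^n)_\p = A_\p = (I_n)_\p$ and $\p \notin \operatorname{Supp}(I_n/I^n)$. The substantive case is $I \subseteq \p \neq \m$. Here I would first run a height count: since $\p \neq \m$ we have $\dim A/\p \geq 1$, so $\htt \p \leq \dim A - \dim A/\p \leq d-1$; on the other hand $\p$ contains a minimal prime of $I$, whence $\htt \p \geq \htt I = d-1$. Thus $\htt \p = d-1$, and a strict containment of primes strictly raises height, so $\p$ must itself be a minimal prime of $I$. Because $I$ is radical, localizing at $\p$ kills the other minimal components and yields $I A_\p = \p A_\p$, the maximal ideal of $A_\p$. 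Since $A$ is an isolated singularity and $\p \neq \m$, the local ring $A_\p$ is regular. I would then invoke the standard fact that in a regular local ring every power of the maximal ideal is integrally closed (the associated graded ring is a polynomial ring, so the order function is a valuation, forcing $\overline{\mathfrak n^{\,k}} = \mathfrak n^{\,k}$). Combining this with the compatibility of integral closure with localization gives $(\overline{I^n})_\p = \overline{(I^n)_\p} = \overline{(\p A_\p)^n} = (\p A_\p)^n = (I^n)_\p$. Sandwiching $(I^n)_\p \subseteq (I_n)_\p \subseteq (\overline{I^n})_\p = (I^n)_\p$ forces equality, so again $\p \notin \operatorname{Supp}(I_n/I^n)$.

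The main obstacle, and the step deserving the most care, is the structural reduction showing that every prime $\p \neq \m$ containing $I$ is in fact a minimal prime of $I$ of height exactly $d-1$; this is precisely where the hypotheses $\htt I = d-1$ and $d \geq 2$ enter, through the inequality $\htt \p + \dim A/\p \leq \dim A$. Once this is in place, the remaining ingredients are routine standard facts, and the proof concludes by assembling the two cases to get $\operatorname{Supp}(I_n/I^n) \subseteq \{\m\}$, hence $\lambda(I_n/I^n) < \infty$ for all $n \geq 1$.
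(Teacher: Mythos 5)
Your proposal is correct and follows essentially the same route as the paper: sandwich $I_n$ between $I^n$ and $\overline{I^n}$ via $\R(\F)\subseteq \overline{\R}(I)$, reduce to showing the support is $\{\m\}$, observe that any $\p\neq\m$ containing $I$ is minimal over $I$ with $I_\p=\p A_\p$, and use that $A_\p$ is regular (so powers of its maximal ideal are integrally closed). Your explicit height count $\htt\p+\dim A/\p\le d$ just fills in a step the paper states without justification.
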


\begin{proof}

Since localization commutes with taking radicals. Thus, $I_{\p}$ is a radical ideal of $A_{\p}$.  The prime ideals of $A$ containing $I$ are $\m$ and the prime ideal of height $d-1$ containing $I$. Since the height of $I$ is $d-1$, localizing $I$ at the prime ideals of height $d-1$ is radical and $\p A_\p$-primary. Therefore,
$I_{\p}\cong \p A_\p.$

Since $\R(\F)\subseteq \overline{\R}(I)$ (see \ref{finitm}), it follows that $I_n/I^n$ is a submodule of  $\overline{I^n}/I^n$ for all $n\geq 1$. Therefore, it suffices to show that $\lambda(\overline{I^n}/I^n)<\infty$ for all $n\geq 1$, that is to show the associated primes of $\overline{I^n}/{I^n}$ contains only maximal ideal.

Let $V(I)$ be the set of all prime ideals of $A$ containing $I$. If we take $\p\in {\Spec}(A)\setminus V(I)$, then clearly $(\overline{I^n}/{I^n})_{\p}=0.$ Let $\p(\neq \m)$ be a prime ideal containing $I.$ As $A$ is an isolated singularity, so $A_{\p}$ is regular local ring for all $\p\neq \m$. Thus, $G_{\p A_{\p}}(A_{\p})$ is a domain for all $\p\neq \m$. By \cite[Lemma 3.1]{JKV}, $\overline{\p_{\p}^n}=\p_{\p}^n$ for all $n\geq 1.$ Hence $(\overline{I^n}/{I^n})_{\p}=\overline{\p_{\p}^n}/\p_{\p}^n=0$. This proves the result.

\end{proof}

\begin{theorem}
\label{Thm:finitelength}
Let $(A,\m)$ be an analytically unramified local ring of dimension $d\geq 2$ and $I$ be a radical ideal of $A$ of height $d-1$. If $A$ is isolated singularity and ${G}_{I}(A)$ is unmixed and equidimensional then either $\overline{I^n}={I^n}$  for all $n\geq 1$ or $\lambda(\overline{I^n}/I^n)$ coincides with a polynomial of degree $d-1$ for all $n\gg0.$ Moreover, if  $\ell(I)<d$ then $\overline{I^n}={I^n}$ for all $n\geq 1.$
\end{theorem}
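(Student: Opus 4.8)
The plan is to deduce this directly from the key Theorem \ref{key}, applied to the pair consisting of the integral closure filtration and the ordinary $I$-adic filtration, once the finite-length hypothesis has been secured via Lemma \ref{Length}. First I would set $\F=\{\overline{I^n}\}_{n\geq 0}$ and $\F'=\{I^n\}_{n\geq 0}$. The filtration $\F'$ is trivially $I$-stable, and since $A$ is analytically unramified, Rees's theorem (as recorded in \ref{finitm} and used in Lemma \ref{Thm:closure}) guarantees that $\overline{\R}(I)=\bigoplus_{n\geq 0}\overline{I^n}t^n$ is module-finite over $\R$, so $\F$ is $I$-stable as well. As $I^n\subseteq \overline{I^n}$ for every $n$, the containment hypothesis on the two filtrations in Theorem \ref{key} is satisfied, and I would set
$$E=\R(\F)/\R(\F')=\overline{\R}(I)/\R=\bigoplus_{n\geq 1}\overline{I^n}/I^n,$$
so that $E_n=\overline{I^n}/I^n$ and the condition $E=0$ is precisely the statement that $\overline{I^n}=I^n$ for all $n\geq 1$.

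The only genuine input required is the finiteness $\lambda(E_n)<\infty$ for every $n$, and this is exactly the content of Lemma \ref{Length}: under the assumption that $A$ is an isolated singularity and $I$ is radical of height $d-1$, the reduction in that proof shows $\lambda(\overline{I^n}/I^n)<\infty$ for all $n\geq 1$. With this in hand, and using the hypothesis that $G_I(A)=G_{\F'}(A)$ is unmixed and equidimensional, Theorem \ref{key} applies verbatim and yields the desired dichotomy: either $E=0$, i.e.\ $\overline{I^n}=I^n$ for all $n\geq 1$, or else $\dim_{\R}(E)=\dim_{\R}(G_I(A))=d$, whence $n\mapsto \lambda(\overline{I^n}/I^n)$ coincides with a polynomial of degree $d-1$ for all $n\gg 0$. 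For the final assertion, if $\ell(I)<d$ then the last clause of Theorem \ref{key} forces $E=0$, so $\overline{I^n}=I^n$ for all $n\geq 1$.

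I do not anticipate a serious obstacle in the argument itself, since the result is an almost immediate specialization of Theorem \ref{key}. The one place where real work is concentrated is the verification of the finite-length hypothesis, which is precisely where the geometric assumptions---that $A$ is an isolated singularity and that $I$ is radical of pure height $d-1$---are consumed, guaranteeing that $(\overline{I^n}/I^n)_{\p}=0$ for every prime $\p\neq \m$. Once Lemma \ref{Length} supplies this input, the proof proceeds with no further computation.
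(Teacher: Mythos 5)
Your proposal is correct and follows exactly the same route as the paper: both set $E=\overline{\R}(I)/\R$, use analytic unramifiedness to get $I$-stability of the integral closure filtration, invoke Lemma \ref{Length} for the finite-length hypothesis, and then apply Theorem \ref{key} (including its last clause for the case $\ell(I)<d$). No differences worth noting.
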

\begin{proof}

  As $A$ is analytically unramified, we thus obtain the integral closure Rees algebra $\overline{\R}(I)=\bigoplus_{n\geq 0}\overline{I^n}t^n$ is a finitely generated $\R$-module. Set $E=\overline{\R}(I)/\R=\bigoplus_{n\geq 1}\overline{I^n}/{I^n}$. Clearly, $E$ is a finitely generated $\R$-module. By Lemma \ref{Length}, $\lambda(E_n)<\infty$ for all $n\geq 1$. The result now follows from the Theorem \ref{key}.

\end{proof}

The next application also follows from a result by Cowsik-Nori, see \cite[Proposition 3]{Cowsik}.
\begin{corollary}
      Let $(A,\m)$ be an analytically unramified local ring of dimension $d\geq 2$ and $\p$ be a prime ideal of $A$ of height $d-1$ with $\ell(\p)<d$. If $A$ is isolated singularity and ${G}_{\p}(A)$ is unmixed and equidimensional then $\overline{\p^n}={\p^n}$ for all $n\geq 1.$
\end{corollary}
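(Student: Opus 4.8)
The plan is to derive this as an immediate consequence of Theorem \ref{Thm:finitelength}. The observation that makes everything work is that a prime ideal is, in particular, radical; thus $\p$ is a radical ideal of height $d-1$, which is exactly the class of ideals covered by Theorem \ref{Thm:finitelength}. The remaining hypotheses of that theorem transfer verbatim from those of the corollary: $A$ is analytically unramified of dimension $d \geq 2$, $A$ is an isolated singularity, and $G_\p(A)$ is unmixed and equidimensional. I would therefore apply Theorem \ref{Thm:finitelength} with $I = \p$, and since we are given $\ell(\p) < d$, its ``moreover'' clause delivers $\overline{\p^n} = \p^n$ for all $n \geq 1$, which is the desired conclusion.

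It is worth recording why $\ell(\p) < d$ is the right hypothesis. For any ideal one has $\htt(\p) \leq \ell(\p) \leq \dim A$, so with $\htt(\p) = d-1$ and $\dim A = d$ the condition $\ell(\p) < d$ forces $\ell(\p) = d-1 = \htt(\p)$; that is, the analytic spread of $\p$ equals its height. This is precisely the setting of the Cowsik--Nori theorem \cite[Proposition 3]{Cowsik}, which furnishes an alternative derivation; I regard the route through Theorem \ref{Thm:finitelength} as the more self-contained one.

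The proof presents essentially no obstacle. The only genuine check is that primeness supplies the radicality required by Theorem \ref{Thm:finitelength}, after which the dichotomy provided by that theorem --- either $\overline{\p^n}=\p^n$ for all $n$, or $\lambda(\overline{\p^n}/\p^n)$ agrees with a polynomial of degree $d-1$ for $n \gg 0$ --- collapses to the first alternative exactly because $\ell(\p) < d$ excludes polynomial growth of degree $d-1$, via the bound in Lemma \ref{Thm:upperbound}.
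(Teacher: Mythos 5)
Your proposal is correct and matches the paper's intent exactly: the corollary is stated as an immediate application of Theorem \ref{Thm:finitelength} (a prime is radical, so the ``moreover'' clause with $\ell(\p)<d$ applies), and the paper likewise remarks on the alternative derivation via Cowsik--Nori.
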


\begin{definition}
\normalfont
    Let $(A,\m)$ be a Noetherian local ring and $I$ an ideal of $A.$ The saturation of $I$ is denoted as $I^{sat}$ and defined by $$I^{sat}=I:\m^{\infty}=\bigcup_{i\geq 0}(I:\m^i).$$ Note that $H^0_{\m}(A/I^n) = \bigcup_{i\geq 0}(0:_{A/I^n}\m^i)= (I^n)^{sat}/I^n$, where $H^0_{\m}(-)$ is the $0$-th local cohomology functor with respect to $\m$. Furthermore, if $I$ is $\m$-primary then $(I^n)^{sat}=A$ for all $n\geq 1$.
\end{definition}

In \cite[Theorem 2.1]{saturatedideal}, S. D. Cutkosky, J. Herzog and H. Srinivasan proved that if $(A,\m)$ is a excellent local domain and $I$ an ideal of $A$ with $\ell(I)<\text{dim}(A)$ then the saturated Rees algebra $\R^{sat}=\bigoplus_{n\geq 0}(I^n)^{sat}t^n$ is a finitely generated $\R$-module.
\begin{theorem}\label{thm:saturated}
    Let $(A,\m)$ be an excellent normal local domain of dimension $d\geq 1$. Let $I$ be an ideal of $A$. If ${G}_{I}(A)$ is unmixed and equidimensional, then one of the following two statements holds:
    \begin{enumerate}
        \item $(I^n)^{sat}=I^n$ for all $n\geq 1.$
        \item $\ell(I)=d$.
    \end{enumerate}
    \begin{proof}
        Suppose $\ell(I)<d$ and $(I^n)^{sat}\neq I^n$ for some $n\geq 1.$ By \cite[Theorem 2.1]{saturatedideal}, $\R^{sat}$ is a finitely generated $\R$-module. Set $E=\R^{sat}/\R$. Note that $\lambda((I^n)^{sat}/I^n)=\lambda(H^0_{\m}(A/I^n))<\infty$ all $n\geq 1$. Therefore, by Lemma \ref{Thm:upperbound}, $ \lambda(E_n)$ coincides with a polynomial of degree at most $\ell(I)-1\leq d-2.$ Since $E\neq 0$ and ${G}_{I}(A)$ is unmixed and equidimensional, the dimension of $E$ is equal to $d.$ This is a contradiction to the Theorem \ref{key}.
    \end{proof}
\end{theorem}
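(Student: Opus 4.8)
The plan is to argue contrapositively: assume $\ell(I) < d$ and prove that $(I^n)^{sat} = I^n$ for every $n \geq 1$. The first and most substantial step is to realise the saturation filtration $\{(I^n)^{sat}\}_{n\geq 0}$ as an $I$-stable filtration, so that the machinery of Theorem \ref{key} becomes applicable. Since $A$ is an excellent local domain and $\ell(I) < \dim A = d$, the theorem of Cutkosky, Herzog and Srinivasan \cite[Theorem 2.1]{saturatedideal} guarantees that the saturated Rees algebra $\R^{sat} = \bigoplus_{n\geq 0}(I^n)^{sat}t^n$ is a finitely generated module over $\R = \bigoplus_{n\geq 0}I^n t^n$. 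I would then verify that $\{(I^n)^{sat}\}_{n\geq 0}$ satisfies the axioms of an $I$-filtration: the containments $(I^{n+1})^{sat} \subseteq (I^n)^{sat}$ and $I \subseteq I^{sat}$ are immediate; the inequality $I^{sat} \neq A$ holds because $I^{sat} = A$ would force $\m^k \subseteq I$ for some $k$, making $I$ $\m$-primary and hence $\ell(I) = d$; and the product condition $(I^n)^{sat}(I^m)^{sat} \subseteq (I^{n+m})^{sat}$ is a short saturation computation. Finite generation of $\R^{sat}$ over $\R$ then upgrades this to an $I$-stable filtration.

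Next I would set $E = \R^{sat}/\R = \bigoplus_{n\geq 1}(I^n)^{sat}/I^n$, a finitely generated $\R$-module, and record that each graded piece has finite length: by definition of saturation, $(I^n)^{sat}/I^n = H^0_\m(A/I^n)$ is the $\m$-torsion submodule of the finitely generated module $A/I^n$, hence of finite length. At this point both hypotheses of Theorem \ref{key} are in place — $E$ arises as the quotient of two $I$-stable filtrations, namely $\F = \{(I^n)^{sat}\}_{n\geq 0}$ and $\F' = \{I^n\}_{n\geq 0}$ with $G_{\F'}(A) = G_I(A)$ unmixed and equidimensional, and $\lambda(E_n) < \infty$ for all $n$. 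One can now conclude in either of two equivalent ways. Directly, the final clause of Theorem \ref{key} yields $E = 0$ because $\ell(I) < d$. Alternatively, and more transparently, Lemma \ref{Thm:upperbound} forces $\lambda(E_n)$ to agree with a polynomial of degree at most $\ell(I) - 1 \leq d - 2$, whereas if $E \neq 0$ the equidimensionality input to Theorem \ref{key} makes $\dim_\R E = d$ and hence the Hilbert function of degree $d - 1$; the two are incompatible, so $E = 0$, i.e. $(I^n)^{sat} = I^n$ for all $n \geq 1$.

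I expect the main obstacle to be concentrated entirely in the first step. The application of Theorem \ref{key} is essentially formal once the saturation filtration is known to be $I$-stable with finite-length quotients, but that $I$-stability rests on the finiteness result of Cutkosky--Herzog--Srinivasan, whose hypothesis $\ell(I) < \dim A$ is precisely what drives the dichotomy: when $\ell(I) = d$ this finiteness can fail and the whole argument collapses, which is exactly why the statement must permit the alternative $\ell(I) = d$. The normality hypothesis plays only a supporting role here, ensuring the clean setting in which both the cited finiteness theorem and the local-cohomology identification $(I^n)^{sat}/I^n = H^0_\m(A/I^n)$ are available without friction.
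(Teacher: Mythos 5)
Your proposal is correct and follows essentially the same route as the paper: invoke the Cutkosky--Herzog--Srinivasan finiteness theorem (valid precisely because $\ell(I)<d$) to make $\{(I^n)^{sat}\}$ an $I$-stable filtration, identify $(I^n)^{sat}/I^n$ with $H^0_\m(A/I^n)$ to get finite length, and then derive the contradiction between the degree bound $\ell(I)-1\leq d-2$ from Lemma \ref{Thm:upperbound} and the dimension $d$ forced by Theorem \ref{key}. The only difference is that you additionally verify the filtration axioms (e.g.\ $I^{sat}\neq A$), which the paper leaves implicit.
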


\section{Lower bounds to \texorpdfstring{$e^*_1(I)$}- and \texorpdfstring{$\overline{e}_1({I})$}-}
In this section, we provide lower bounds for $e^*_1(I)$ and $\overline{e}_1({I})$ in terms of the local cohomology of $A$. We also discuss the consequences when it attains its lower bound. To do this, we need to recall the following:

\begin{point}\label{HC}
    \normalfont

Let $\mathcal{F}=\{I_n\}_{n\geq 0}$ be an $I$-stable filtration, where $I$ is a $\m$-primary ideal of a Noetherian local ring $A$. Then the Rees algebra $\R(\mathcal{F})=\bigoplus_{n\geq 0}I_nt^n$ is a finitely generated $\R$-module. Consequently, the associated graded ring $G_{\mathcal{F}}(A)=\bigoplus_{n\geq 0}I_n/I_{n+1}$ is a finitely generated graded module over the associated graded ring $G_I(A)$. Therefore, the Hilbert-Samuel function defined by $H_{\mathcal{F}}(n)=\lambda(A/(I_{n+1})$ coincides with a polynomial, $P_{\mathcal{F}}(n)$, of degree $d=\text{dim }A$ for all $n\gg0.$ The polynomial can be expressed as:

$$P_{\mathcal{F}}(n)=e_0({\mathcal{F}})\binom{n+d}{d}-e_1({\mathcal{F}})\binom{n+d-1}{d-1}+\ldots +(-1)^de_d({\mathcal{F}}),$$
where $e_i({\mathcal{F}})\in \Z$ and it is called as the $i$-th Hilbert coefficient of $G_{\mathcal{F}}(A)$.

In particular:
\begin{itemize}
    \item[(1)] For the filtration $\{\overline{I^n}\}_{n\geq 0}$, the coefficients $\overline{e}_i(I)$ are called the {$i$-th normal Hilbert coefficients} (when $A$ is analytically unramified).
    \item[(2)] For the filtration $\{(I^n)^*\}_{n\geq 0}$, the coefficients $e_i^*(I)$ are called the {tight Hilbert coefficients} (when $A$ is analytically unramified of characteristic $p>0$).
    \item[(3)] For the $I$-adic filtration $\F=\{I^n\}_{n\geq 0}$, we denote the coefficients by $e_i(I)$, the {$i$-th Hilbert coefficients} of $G_I(A)$.
\end{itemize}
It is well known that $e_0(I)=e^*_0(I)=\overline{e}_0({I}).$

\end{point}
\begin{point}
\normalfont
    The ring $(A,\m)$ of dimension $d$ is said to be generalized Cohen-Macaulay if \\ $\lambda(H^i_{\m}(A))<\infty$ for all $i=0,\ldots ,d-1$, where $H^i_{\m}(A)$ is $i$-th local cohomology of $A$ with respect to $\m$.

    Let $q$ be a parameter ideal of $A.$ Set $I(q;A)=\lambda(A/q)-e(q;A)$, where $e(q;A)$ is multiplicity of $A$ with respect to $q$. Set $I(A)=\text{sup }I(q;A)$, where $q$ runs through all parameter ideals of $A.$ It is well known that if $A$ is generalized Cohen-Macaulay then $I(A)=I(q;A)$ for some parameter ideal $q.$
\end{point}
\begin{definition}
\normalfont
    Let $(A, \mathfrak{m})$ be a Noetherian local ring of dimension $d$. Then $A$ is called a \emph{Buchsbaum ring} if for every system of parameters $\mathbf{a} = a_1, \dots, a_d$ of $A$, the difference
\[
\lambda\big(A/{q}\big) - e(q; A)
\]
is independent of the choice of the system of parameters, where $q=(\mathbf{a})$.
\end{definition}
\begin{definition}\label{def}
    \normalfont
    Let $(A,\m)$ be a local ring of dimension $d.$ $a_1,\ldots ,a_d \in \m$ is said to be standard system of parameters of $A$ if $$I({a_1}^2,\ldots ,{a_d}^2 ; A)=I(q; A),$$ where $q=(a_1,\ldots ,a_d).$
\end{definition}

\begin{remark}\cite[Page 13]{NVT}
    If $(A,\m)$ is a Buchsbaum local ring, then every system of parameters is a standard system of parameters.
\end{remark}
\begin{proposition}\cite[Theorem 2.1]{NVT}
    Let $(A,\m)$ be a generalized Cohen-Macaulay ring of dimension $d.$ Then $a_1,
    \ldots ,a_d\in \m$ is standard system of parameters if $I(A)=I(q ; A),$ where $q=(a_1,\dots, a_d)$.
\end{proposition}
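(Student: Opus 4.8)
The plan is to unwind Definition \ref{def}: the system $a_1,\ldots,a_d$ is standard precisely when $I(Q;A)=I(q;A)$, where $Q=(a_1^2,\ldots,a_d^2)$. Since $Q$ is again a parameter ideal and $I(A)$ is by definition the supremum of $I(\cdot\,;A)$ over all parameter ideals, we automatically have $I(Q;A)\le I(A)$, and the hypothesis $I(A)=I(q;A)$ turns this into $I(Q;A)\le I(q;A)$. Thus the entire statement reduces to the reverse inequality $I(q;A)\le I(Q;A)$, i.e.\ to a \emph{monotonicity} of $I(\cdot\,;A)$ under squaring the generators of $q$; once that is known, the sandwich $I(q;A)\le I(Q;A)\le I(A)=I(q;A)$ forces $I(Q;A)=I(q;A)$, which is exactly standardness.

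To prove the monotonicity I would square the generators one at a time and show $I(\cdot\,;A)$ does not decrease at any step. Fix $q'=(a_1^2,a_2,\ldots,a_d)$. Multiplication by $a_1$ gives the short exact sequence
\[
0\to A/(q':a_1)\xrightarrow{\,a_1\,}A/q'\to A/q\to 0,
\]
so $\lambda(A/q')=\lambda(A/q)+\lambda(A/(q':a_1))$. Combining this with $e(q';A)=2\,e(q;A)$ (squaring one parameter doubles the multiplicity) and with $q\subseteq(q':a_1)$ yields
\[
I(q';A)=2\,I(q;A)-\lambda\big((q':a_1)/q\big).
\]
Hence $I(q';A)\ge I(q;A)$ is equivalent to the colon-length bound $\lambda\big((q':a_1)/q\big)\le I(q;A)$. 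Squaring $a_2,\ldots,a_d$ in turn (the intermediate ideals remain generated by powers of the $a_i$, so the identical computation applies at each stage) then assembles into $I(q;A)\le I(Q;A)$.

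The main obstacle is precisely this colon-length bound $\lambda\big((q':a_1)/q\big)\le I(q;A)$, and it is exactly here that the generalized Cohen--Macaulay hypothesis is essential: were $A$ Cohen--Macaulay, $a_1,\ldots,a_d$ would be a regular sequence and both sides would vanish, leaving nothing to prove. In general I would identify $(q':a_1)/q$ with a subquotient of $(0:_{A/(a_2,\ldots,a_d)}a_1)$ and control its length by the finite-length local cohomology modules $H^i_\m(A)$, whose finiteness is the generalized Cohen--Macaulay condition. Concretely one shows that $(q':a_1)/q$ embeds into the subquotient of $\bigoplus_i H^i_\m(A)$ that computes $I(q;A)$; in dimension one this is the direct check that $(0:_A a_1)/\big((0:_A a_1)\cap a_1A\big)$ injects into $H^0_\m(A)/\big(H^0_\m(A)\cap a_1A\big)$, whose length equals $I(q;A)$. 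Carrying this bookkeeping through all cohomological degrees $0\le i<d$ is the technical heart; it is the core estimate of the Cuong--Schenzel--Trung and Trung theory of generalized Cohen--Macaulay modules, and supplies the content of \cite[Theorem 2.1]{NVT}.
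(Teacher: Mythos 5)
First, a remark on the comparison itself: the paper offers no proof of this proposition --- it is stated purely as a citation of \cite[Theorem 2.1]{NVT} --- so there is no internal argument to measure you against, only the literature. Your overall strategy is the correct and standard one: since $Q=(a_1^2,\ldots,a_d^2)$ is again a parameter ideal, $I(Q;A)\le I(A)=I(q;A)$, and standardness in the sense of Definition \ref{def} is exactly $I(Q;A)=I(q;A)$, so everything reduces to the monotonicity $I(q;A)\le I(Q;A)$. Your one-step computation is also correct: the exact sequence $0\to A/(q':a_1)\to A/q'\to A/q\to 0$, the identity $e(q';A)=2\,e(q;A)$, and the inclusion $q\subseteq (q':a_1)$ do combine to give $I(q';A)=2\,I(q;A)-\lambda\bigl((q':a_1)/q\bigr)$, so the whole matter is equivalent to the colon-length bound $\lambda\bigl((q':a_1)/q\bigr)\le I(q;A)$.

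The gap is that you only verify this bound in dimension one, and the mechanism you propose for $d\ge 2$ is circular. You suggest embedding $(q':a_1)/q$ into ``the subquotient of $\bigoplus_i H^i_{\m}(A)$ that computes $I(q;A)$''; but the identity $I(q;A)=\sum_{j=0}^{d-1}\binom{d-1}{j}\lambda(H^j_{\m}(A))$ is itself one of the standard characterizations of $q$ being standard --- precisely the conclusion being sought. For a general parameter ideal one only has $I(q;A)\le \sum_{j=0}^{d-1}\binom{d-1}{j}\lambda(H^j_{\m}(A))$, possibly strictly, so controlling $\lambda\bigl((q':a_1)/q\bigr)$ by local cohomology lengths does not produce the bound by $I(q;A)$ that your telescoping argument requires. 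The monotonicity $I(a_1^{n_1},\ldots,a_d^{n_d};A)\ge I(a_1,\ldots,a_d;A)$ is a genuine prior result (Cuong--Schenzel--Trung, 1978; it is also recorded in Section 1 of \cite{NVT} and in St\"uckrad--Vogel's book, where it is proved by induction on $d$ with different bookkeeping, and it does not presuppose standardness). If you simply cite that lemma, your sandwich $I(q;A)\le I(Q;A)\le I(A)=I(q;A)$ closes and the proof is complete and clean; as written, however, the technical heart is asserted rather than proven, and the route you sketch for it would not deliver the needed inequality.
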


The following result examines the implications of the equality $\overline{e}_1(I)=e_1(I).$ A similar result was established in \cite[Theorem 1]{nvte1}.
\begin{theorem}
\label{Areg}
   Let $(A,\m)$ be a Cohen-Macaulay and analytically unramified local ring of dimension $d\geq 1$. Let $I$ be a system of parameters. If $\overline{e}_1(I)=e_1(I)$, then $A$ is a regular local ring.
\end{theorem}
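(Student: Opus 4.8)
The plan is to first reduce the equality $\overline{e}_1(I)=e_1(I)$ to the statement that $I$ is a normal ideal, and then to deduce regularity. Since $A$ is Cohen–Macaulay and $I$ is generated by a system of parameters $a_1,\dots,a_d$ (hence by a regular sequence), the associated graded ring is a polynomial ring $G_I(A)\cong (A/I)[X_1,\dots,X_d]$; in particular $G_I(A)$ is Cohen–Macaulay, hence unmixed and equidimensional, and the Hilbert–Samuel function is $\lambda(A/I^{n+1})=\lambda(A/I)\binom{n+d}{d}$ for all $n\ge 0$. Reading off the Hilbert coefficients gives $e_1(I)=0$, so the hypothesis becomes $\overline{e}_1(I)=0$.

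Next I would compare the two filtrations $\{I^n\}$ and $\{\overline{I^n}\}$. As $A$ is analytically unramified, the normal filtration is $I$-stable and $\overline{e}_0(I)=e_0(I)$. For $n\gg 0$,
\[
\lambda\!\left(\overline{I^{n+1}}/I^{n+1}\right)=\lambda(A/I^{n+1})-\lambda(A/\overline{I^{n+1}})=(\overline{e}_1(I)-e_1(I))\binom{n+d-1}{d-1}+(\text{lower order}),
\]
so under $\overline{e}_1(I)=e_1(I)$ the function $n\mapsto \lambda(\overline{I^n}/I^n)$ agrees with a polynomial of degree at most $d-2$. On the other hand, Theorem \ref{Thm:integralclosure} asserts that, because $G_I(A)$ is unmixed and equidimensional, this function is either a polynomial of degree exactly $d-1$ or is identically zero. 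Degree $\le d-2$ rules out the first alternative for every $d\ge 1$, so I conclude $\overline{I^n}=I^n$ for all $n\ge 1$; in particular $\overline{I}=I$.

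It then remains to show that a Cohen–Macaulay, analytically unramified local ring possessing a normal parameter ideal is regular. The approach I would take is via the module-finite normalization: after passing to the completion (integral closure commutes with completion for analytically unramified rings, and this affects neither the hypotheses nor the conclusion), let $B=\overline{A}$ and $C=B/A$. Through the valuative description of integral closure one has $\overline{I^n}=I^nB\cap A$, and the normality conditions $\overline{I^n}=I^n$ translate into the statement that multiplication by a superficial parameter element $x\in I$ is injective on $C$. In dimension one $C$ has finite length, so an injective endomorphism is bijective; together with $xC\subseteq \m C$ and Nakayama this forces $C=0$, whence $A=B$ is a one-dimensional normal local ring, i.e.\ a DVR, and hence regular. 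The general case I would reduce to dimension one by cutting with a general superficial element of $I$.

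The main obstacle is precisely this last step. Controlling the normality hypothesis $\overline{I^n}=I^n$ and the analytically-unramified property under reduction modulo a superficial element is delicate, as is the passage from injectivity of $x$ on $C$ to $C=0$ in dimension $>1$, where $C$ need not have finite length. The cleanest route is likely to invoke the known theorem that a formally equidimensional local ring with an integrally closed parameter ideal is regular, and to verify that our hypotheses (Cohen–Macaulay, hence formally equidimensional, together with $\overline{I}=I$) place us in its scope; the genuine content of Theorem \ref{Areg} is then exactly the reduction carried out in the first two paragraphs.
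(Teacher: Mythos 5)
Your proposal is correct and follows essentially the same route as the paper: the identical exact-sequence comparison of Hilbert polynomials combined with Theorem \ref{Thm:integralclosure} forces $\overline{I^n}=I^n$ for all $n\geq 1$, and the remaining step --- that an integrally closed parameter ideal in this setting forces regularity --- is exactly what the paper handles by citing Goto's theorem \cite[Theorem 1.1]{Goto}. You should simply make that citation the official final step and discard the sketched normalization/superficial-element argument, which as you note is not complete as written.
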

\begin{proof}
For each $n\geq 0$, we have the following short exact sequence $$0\to \overline{I^{n+1}}/I^{n+1} \to A/I^{n+1}\to A/\overline{I^{n+1}}\to 0.$$  This induces $$0\to E  \to L^I(A)\to L^{\overline{I}}(A)\to 0,$$ where $E=\bigoplus_{n\geq 0}\overline{I^{n+1}}/I^{n+1}$, $L^I=\bigoplus_{n\geq 0}A/I^{n+1}$ and $L^{\overline{I}}=\bigoplus_{n\geq 0}A/\overline{I^{n+1}}.$ This implies for sufficiently large  $n$, $$\lambda(\overline{I^{n+1}}/I^{n+1})=(\overline{e}_1({I})- e_1(I))\binom{n+d-1}{d-1}+\text{ lower degree terms}.$$

Since $A$ is Cohen-Macaulay and $I$ is a system of parameters, $I$ is generated by a regular sequence. Therefore, $G_I(A)$ is Cohen-Macaulay. By Theorem \ref{Thm:integralclosure}, either $\overline{I^n}=I^n$ for all $n\geq 1$, or the function $n \mapsto \lambda(\overline{I^n}/I^n)$ coincides with a polynomial of degree $d-1$ for all $n\gg0$. Since $\overline{e}_1(I)=e_1(I)$, it follows that  $\overline{I^n}=I^n$ for all $n\geq 1$. The result then follows from Theorem 1.1 of \cite{Goto}.
\end{proof}

The following result explores the implications of the equality ${e}^*_1(I)=e_1(I).$ A related result was established in \cite[Corollary 4.6]{saipriya}, where the authors first reduced the problem to the Cohen–Macaulay case and then proved it.
\begin{theorem}
\label{Frational}
     Let $(A,\m)$ be a Cohen-Macaulay and analytically unramified local ring of dimension $d\geq 1$ with characteristics $p>0$.  Let $I$ be a system of parameters. If ${e}^*_1(I)=e_1(I)$, then $A$ is an $F$-rational ring.
\end{theorem}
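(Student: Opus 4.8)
The plan is to follow the argument of Theorem~\ref{Areg} almost verbatim, replacing the integral closure filtration $\{\overline{I^n}\}$ by the tight closure filtration $\{(I^n)^*\}$ throughout, and replacing Goto's regularity criterion at the end by the corresponding $F$-rationality criterion for tightly closed parameter ideals.

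First I would write down, for each $n\geq 0$, the short exact sequence
$$0\to (I^{n+1})^*/I^{n+1}\to A/I^{n+1}\to A/(I^{n+1})^*\to 0,$$
which is legitimate since $I^{n+1}\subseteq (I^{n+1})^*$. Assembling these over all $n$ produces an exact sequence of graded modules $0\to E\to L^I(A)\to L^{I^*}(A)\to 0$, where $E=\bigoplus_{n\geq 0}(I^{n+1})^*/I^{n+1}$ is the module appearing in Theorem~\ref{Thm:tightclosure} and $L^{I^*}(A)=\bigoplus_{n\geq 0}A/(I^{n+1})^*$. Comparing the Hilbert--Samuel polynomials of $L^I(A)$ and $L^{I^*}(A)$ coefficient by coefficient, and using that both filtrations have the same multiplicity $e_0$ (as noted in \ref{HC}), I would conclude that for $n\gg0$ the length $\lambda((I^{n+1})^*/I^{n+1})$ agrees with a polynomial whose leading term is $(e^*_1(I)-e_1(I))\binom{n+d-1}{d-1}$.

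Next I would use that $A$ is Cohen--Macaulay and $I$ is generated by a system of parameters, hence by a regular sequence, so that $G_I(A)$ is Cohen--Macaulay and in particular unmixed and equidimensional. Theorem~\ref{Thm:tightclosure} then yields the dichotomy that either $(I^n)^*=I^n$ for all $n\geq 1$, or the function $n\mapsto\lambda((I^n)^*/I^n)$ coincides with a polynomial of degree exactly $d-1$ for $n\gg0$. The hypothesis $e^*_1(I)=e_1(I)$ forces the leading coefficient computed above to vanish, which excludes the second alternative; therefore $(I^n)^*=I^n$ for all $n\geq 1$, and in particular the parameter ideal $I$ itself satisfies $I^*=I$.

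The final and only delicate step is to pass from the tight closedness of a single parameter ideal to $F$-rationality of $A$. This is the characteristic-$p$ analogue of Goto's theorem used in Theorem~\ref{Areg}: for a Cohen--Macaulay local ring of characteristic $p>0$ (which is automatically a homomorphic image of a Cohen--Macaulay ring, namely itself), $F$-rationality is equivalent to the existence of one parameter ideal that is tightly closed. Invoking this criterion with $I^*=I$ gives at once that $A$ is $F$-rational, completing the proof. The main obstacle is therefore locating and correctly applying this $F$-rationality criterion; the Hilbert-coefficient bookkeeping and the appeal to Theorem~\ref{Thm:tightclosure} are routine once the integral closure argument of Theorem~\ref{Areg} is in place.
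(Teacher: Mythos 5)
Your proposal is correct and follows essentially the same route as the paper: the paper's proof likewise reuses the exact-sequence and Hilbert-coefficient comparison from Theorem \ref{Areg} with the tight closure filtration, invokes Theorem \ref{Thm:tightclosure} to conclude $(I^n)^*=I^n$ for all $n\geq 1$ from $e_1^*(I)=e_1(I)$, and then cites the criterion (Proposition 10.3.5 of Bruns--Herzog) that a Cohen--Macaulay local ring with a tightly closed parameter ideal is $F$-rational. Your remark that a Cohen--Macaulay ring is a homomorphic image of a Cohen--Macaulay ring, so the criterion applies unconditionally here, is a correct and slightly more careful justification of that last step than the paper gives.
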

\begin{proof}
    By the same arguments in Theorem \ref{Areg}, we obtain that $G_I(A)$ is Cohen-Macaulay and for sufficiently large  $n$, $$\lambda(({I^{n+1})^*}/I^{n+1})=({e}^*_1({I})- e_1(I))\binom{n+d-1}{d-1}+\text{ lower degree terms}.$$  Since $e^*_1({I})=e_1(I)$, it follows that from Corollary \ref{Thm:tightclosure} that  $(I^n)^*=I^n$ for $n \geq 1$. The result then follows from \cite[Proposition 10.3.5]{Bruns}.
\end{proof}

\begin{remark}\cite[Lemma 1.2(iv)]{NVT} \label{gcm}
    If $(A,\m)$ is a generalized Cohen-Macaulay local ring having positive depth, then $A$ is unmixed and equidimensional.
\end{remark}

\begin{theorem}\label{gcm-main}
    Let $(A,\m)$ be a generalized Cohen-Macaulay and analytically unramified local ring of dimension $d\geq 1$. Let $I$ be a standard system of parameters. Then,
\begin{enumerate}
     \item $$\overline{e}_1({I}) \geq -\left(\sum_{j=1}^{d-1}\binom{d-2}{j-1}\lambda(H^j_{\m}(A))\right).$$
    \item If the characteristic of $A$ is $p>0$, $$e^*_1(I)\geq -\left(\sum_{j=1}^{d-1}\binom{d-2}{j-1}\lambda(H^j_{\m}(A))\right).$$

\end{enumerate}
   If equality holds, then in the first case, $A$ is a regular local ring and in the second case, $A$ is an $F$-rational provided $A$ is a homomorphic image of a Cohen-Macaulay local ring.
\end{theorem}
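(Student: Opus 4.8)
The plan is to recognize the right-hand side as the \emph{ordinary} first Hilbert coefficient $e_1(I)$ of the $I$-adic filtration and then to compare filtrations. The first step is to establish, for a standard system of parameters $I$ in a generalized Cohen-Macaulay ring of dimension $d$, the exact formula $e_1(I)=-\sum_{j=1}^{d-1}\binom{d-2}{j-1}\lambda(H^j_\m(A))$. I would prove this by induction on $d$. The case $d=1$ is trivial, since a generalized Cohen-Macaulay ring of dimension one with positive depth is Cohen-Macaulay, so $e_1(I)=0$, the empty sum. For the inductive step, choose a superficial element $a$ among the standard parameters (a nonzerodivisor, by positive depth) and pass to $\bar A=A/aA$, which remains generalized Cohen-Macaulay with $\bar I$ a standard system of parameters by \cite{NVT}. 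The exact sequence $0\to A\xrightarrow{a}A\to\bar A\to 0$ expresses $\lambda(H^j_\m(\bar A))$ through $\lambda(H^j_\m(A))$ and $\lambda(H^{j+1}_\m(A))$, and the standard behaviour of $e_1$ under a superficial reduction supplies the recursion; Pascal's identity $\binom{d-2}{j-1}=\binom{d-3}{j-2}+\binom{d-3}{j-1}$ makes it close up.

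Both inequalities then follow from a single comparison. Since $A$ is analytically unramified, the filtrations $\{\overline{I^n}\}$ and $\{(I^n)^*\}$ are $I$-stable, so each has a degree-$d$ Hilbert polynomial with the same leading coefficient $e_0(I)$. From $I^n\subseteq (I^n)^*\subseteq\overline{I^n}$ we obtain $\lambda(A/I^{n+1})\ge\lambda(A/(I^{n+1})^*)\ge\lambda(A/\overline{I^{n+1}})$ for all $n$, so the pairwise differences of Hilbert polynomials are eventually nonnegative; as the $e_0$-terms cancel, the surviving degree-$(d-1)$ coefficients force $\overline{e}_1(I)\ge e_1^*(I)\ge e_1(I)$. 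Together with the formula of Step 1 this is precisely parts (1) and (2).

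For equality in (1), suppose $\overline{e}_1(I)$ equals the bound, i.e. $\overline{e}_1(I)=e_1(I)$. Because $I$ is a standard system of parameters in a generalized Cohen-Macaulay ring, $G_I(A)$ is unmixed and equidimensional (Remark \ref{gcm} and \cite{NVT}), so the dichotomy of Theorem \ref{Thm:integralclosure} applies; as the degree-$(d-1)$ coefficient $\overline{e}_1(I)-e_1(I)$ of $n\mapsto\lambda(\overline{I^n}/I^n)$ vanishes, we conclude $\overline{I^n}=I^n$ for all $n\ge 1$. It then remains to descend to the Cohen-Macaulay case: combining $\overline{e}_1(I)=e_1(I)$ with the formula of Step 1 and the nonnegativity of the normal first Hilbert coefficient $\overline{e}_1(I)\ge 0$ (a positivity result for analytically unramified rings) yields $\sum_{j=1}^{d-1}\binom{d-2}{j-1}\lambda(H^j_\m(A))=0$, and since each binomial with $1\le j\le d-1$ is positive, $H^j_\m(A)=0$ for all such $j$; with positive depth this makes $A$ Cohen-Macaulay. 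Theorem \ref{Areg} now applies and gives that $A$ is regular.

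The tight-closure equality is parallel: $e_1^*(I)=e_1(I)$ and the dichotomy of Theorem \ref{Thm:tightclosure} give $(I^n)^*=I^n$ for all $n$, so $I$ and all its powers are tightly closed. Here the descent to the Cohen-Macaulay case is where the extra hypothesis enters: when $A$ is a homomorphic image of a Cohen-Macaulay local ring, tight closure of parameter ideals forces Cohen-Macaulayness (the reduction also used in \cite{saipriya}), and then Theorem \ref{Frational} yields that $A$ is $F$-rational. The two steps I expect to be the main obstacles are the local-cohomology bookkeeping in the inductive formula of Step 1 (keeping $\bar A$ generalized Cohen-Macaulay with $\bar I$ standard, and tracking exactly which cohomology terms the superficial reduction introduces) and, more seriously, the descent to the Cohen-Macaulay case in the equality statements — secured in (1) by the positivity $\overline{e}_1(I)\ge 0$ and in (2) by the homomorphic-image hypothesis.
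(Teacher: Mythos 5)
Your overall architecture is the same as the paper's: recognize the right-hand side as $-e_1(I)$, deduce both inequalities by comparing Hilbert polynomials along $I^n\subseteq (I^n)^*\subseteq\overline{I^n}$, and in the equality case invoke the dichotomy of Theorems \ref{Thm:integralclosure} and \ref{Thm:tightclosure} (legitimate here because $G_I(A)$ is unmixed and equidimensional by \cite[Theorem 5.4]{NVT} and Remark \ref{gcm}) to force $\overline{I^n}=I^n$, respectively $(I^n)^*=I^n$, for all $n$. You deviate in two sub-steps. For the identity $e_1(I)=-\sum_{j=1}^{d-1}\binom{d-2}{j-1}\lambda(H^j_\m(A))$, the paper simply reads off the coefficient of $\binom{n+d-1}{d-1}$ in Trung's closed formula \cite[Theorem 4.1]{NVT}, using that positive depth kills the $j=0$ term; your induction via superficial elements is a viable alternative but, as sketched, has a real wrinkle: $A/aA$ need not have positive depth even though $A$ does, so the statement being inducted on must carry the $H^0_\m$-contribution (i.e.\ you must prove the full formula, not its depth-positive specialization), otherwise your base case and your recursion do not match up. In the equality case you insert a detour the paper does not need: after the dichotomy already yields $\overline{I^n}=I^n$ for all $n$, you appeal to the external positivity $\overline{e}_1(I)\geq 0$ to annihilate the local cohomology and reduce to the Cohen--Macaulay setting before citing Theorem \ref{Areg}. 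That positivity is a nontrivial theorem (Goto--Hong--Mandal) not cited in this paper, and its standard proof itself assumes $A$ is a homomorphic image of a Cohen--Macaulay ring --- a hypothesis you do not have in case (1). It is also unnecessary: once the parameter ideal $I$ is integrally closed, Goto's theorem \cite[Theorem 1.1]{Goto}, which is the final step in the proof of Theorem \ref{Areg} and does not require $A$ to be Cohen--Macaulay in this setting, gives regularity directly; this is how the paper concludes, and the analogous shortcut (tightly closed parameter ideal plus \cite[Proposition 10.3.5]{Bruns}) handles the $F$-rational case. So the proposal is essentially correct, but both of its detours should either be repaired as above or simply replaced by the citations the paper uses.
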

\begin{proof}
By Theorem $4.1$ in \cite{NVT}, we have
\begin{equation*}
    \begin{split}
        \lambda(A/I^{n+1})= & \binom{n+d}{d}e_0(I)+ \sum_{i=1}^{d}\sum_{j=0}^{d-i}\binom{n+d-i}{d-i}\binom{d-i-1}{j-1}\lambda(H^j_{\m}(A))\\ = & \binom{n+d}{d}e_0(I)+ \binom{n+d-1}{d-1}\left[ \sum_{j=0}^{d-1}\binom{d-2}{j-1}\lambda(H^j_{\m}(A)) \right]+ \text{lower degree term.}
    \end{split}
\end{equation*}
for all $n\geq 0,$ where $\binom{d-i-1}{-1}=0$ if $i\neq d$ and $\binom{-1}{-1}=1.$ Since $A$ is analytically unramified, it has positive depth and by comparing the Hilbert coefficients, we obtain $$-e_1(I)=\sum_{j=1}^{d-1}\binom{d-2}{j-1}\lambda(H^j_{\m}(A)).$$

   On the other hand, since $A$ is a generalized Cohen-Macaulay local ring with positive depth, and by \cite[Theorem 5.4]{NVT}, $G_I(A)$ is a generalized Cohen-Macaulay ring with positive depth. Therefore, by Remark \ref{gcm}, $G_I(A)$ is unmixed and equidimensional. From the proof of Theorem \ref{Areg} and Theorem \ref{Frational}, we obtain

$$\overline{e}_1({I}) \geq  \,  e_1(I)= -\left(\sum_{j=1}^{d-1}\binom{d-2}{j-1}\lambda(H^j_{\m}(A))\right)$$ and if the characteristic of $A$ is $p>0$, $$e^*_1(I)\geq  \, e_1(I)= -\left(\sum_{j=1}^{d-1}\binom{d-2}{j-1}\lambda(H^j_{\m}(A))\right).$$

    Hence, by Theorem \ref{Areg} and Theorem \ref{Frational},  if equality holds, then in the first case $A$ is a regular local ring, and in the second case $A$ is an $F$-rational ring.
\end{proof}

\providecommand{\bysame}{\leavevmode\hbox to3em{\hrulefill}\thinspace}
\providecommand{\MR}{\relax\ifhmode\unskip\space\fi MR }
\providecommand{\MRhref}[2]{
  \href{http://www.ams.org/mathscinet-getitem?mr=#1}{#2}
}

\end{document}